
  \NeedsTeXFormat{LaTeX2e}[1996/06/01]

  \documentclass[multi]{cambridge7A}
  \usepackage[numbers]{natbib}

  \usepackage{rotating}
  \usepackage{floatpag}
  \rotfloatpagestyle{empty}

 \usepackage{amsmath,amsfonts,mathrsfs,amssymb}
  \usepackage{amsthm}
  \usepackage{graphicx}



  \usepackage{multind}\ProvidesPackage{multind}
  \makeindex{authors}
  \makeindex{subject}


  \theoremstyle{plain}
  \newtheorem{theorem}{Theorem}[chapter]
  \newtheorem{lemma}[theorem]{Lemma}
  \newtheorem*{corollary}{Corollary}
  \newtheorem{conjecture}[theorem]{Conjecture}
  \newtheorem{proposition}[theorem]{Proposition}

  \theoremstyle{definition}
  \newtheorem{definition}[theorem]{Definition}
  \newtheorem{example}[theorem]{Example}

  \theoremstyle{remark}
  
  \newtheorem*{note}{Note}

  \hyphenation{line-break line-breaks docu-ment triangle cambridge amsthdoc
    cambridgemods baseline-skip author authors cambridgestyle en-vir-on-ment polar}

  \setcounter{tocdepth}{2}


\newcommand{\MHM}{\operatorname{MHM}}

\newcommand{\pt}{\mathit{pt}}
\newcommand{\Dmod}{\mathscr{D}}
\newcommand{\Mmod}{\mathcal{M}}
\newcommand{\Nmod}{\mathcal{N}}

\newcommand{\derR}{\mathbf{R}}
\newcommand{\derL}{\mathbf{L}}

\newcommand{\Ltensor}{\overset{\derL}{\tensor}}
\newcommand{\decal}[1]{\lbrack #1 \rbrack}
\newcommand{\shH}{\mathcal{H}}

\newcommand{\abs}[1]{\lvert #1 \rvert}

\newcommand{\tensor}{\otimes}

\newcommand{\shHom}{\mathcal{H}\hspace{-1pt}\mathit{om}}

\newcommand{\ZZ}{\mathbb{Z}}
\newcommand{\QQ}{\mathbb{Q}}

\newcommand{\CC}{\mathbb{C}}
\newcommand{\HH}{\mathbb{H}}

\newcommand{\menge}[2]{\bigl\{ \thinspace #1 \thinspace\thinspace \big\vert%
\thinspace\thinspace #2 \thinspace \bigr\}}
\newcommand{\Menge}[2]{\Bigl\{ \thinspace #1 \thinspace\thinspace \Big\vert%
\thinspace\thinspace #2 \thinspace \Bigr\}}

\DeclareMathOperator{\im}{im}

\DeclareMathOperator{\Spec}{Spec}

\DeclareMathOperator{\Supp}{Supp}
\DeclareMathOperator{\codim}{codim}

\DeclareMathOperator{\rat}{rat}

\DeclareMathOperator{\DR}{DR}

\DeclareMathOperator{\Hom}{Hom}

\DeclareMathOperator{\GL}{GL}

\DeclareMathOperator{\Pic}{Pic}

\newcommand{\define}[1]{\emph{#1}}
\newcommand{\shf}[1]{\mathscr{#1}}

\newcommand{\omegaX}{\omega_X}

\newcommand{\into}{\hookrightarrow}

\newcommand{\jl}{j_{\ast}}

\newcommand{\jlsl}{j_{!\ast}}
\newcommand{\ju}{j^{\ast}}
\newcommand{\fu}{f^{\ast}}
\newcommand{\fl}{f_{\ast}}

\newcommand{\iu}{i^{\ast}}
\newcommand{\ius}{i^{!}}
\newcommand{\il}{i_{\ast}}

\newcommand{\pl}{p_{\ast}}

\newcommand{\fp}{f_{+}}

\newcommand{\theoremref}[1]{Theorem~\ref{#1}}
\newcommand{\lemmaref}[1]{Lemma~\ref{#1}}
\newcommand{\definitionref}[1]{Definition~\ref{#1}}
\newcommand{\propositionref}[1]{Proposition~\ref{#1}}
\newcommand{\conjectureref}[1]{Conjecture~\ref{#1}}

\newcommand{\OA}{\shf{O}_A}

\newcommand{\OAsh}{\shf{O}_{\Ash}}
\newcommand{\Ash}{A^{\natural}}
\newcommand{\Bsh}{B^{\natural}}

\newcommand{\fsh}{f^{\natural}}
\newcommand{\fsi}{f^{+}}
\renewcommand{\mp}{m_+}
\newcommand{\Db}{\mathrm{D}^{\mathit{b}}}
\newcommand{\Dbcoh}{\Db_{\mathit{coh}}}
\newcommand{\Dtcoh}[1]{\mathrm{D}_{\mathit{coh}}^{#1}}
\newcommand{\pshH}{ {^p} \shH}
\newcommand{\ppshH}{ {^{p_+}} \! \shH}

\newcommand{\Dbh}{\Db_{\mathit{h}}}
\newcommand{\Dbc}{\Db_{\mathit{c}}}

\newcommand{\Dtppc}[1]{ {^{p_+}} \! \mathrm{D}_c^{#1}}
\renewcommand{\Hom}{\operatorname{Hom}}

\DeclareMathOperator{\Aut}{Aut}
\renewcommand{\shHom}{\mathcal{H}\mathit{om}}
\newcommand{\DA}{\mathbf{D}_A}

\DeclareMathOperator{\Char}{Char}
\newcommand{\CCrho}{\CC_{\rho}}
\newcommand{\Psh}{P^{\natural}}
\newcommand{\nablash}{\nabla^{\natural}}
\DeclareMathOperator{\FM}{FM}
\newcommand{\CCst}{\CC^{\ast}}
\renewcommand{\shH}{\mathcal{H}}
\renewcommand{\HH}{\mathbf{H}}
\newcommand{\opp}{\mathit{opp}}
\newcommand{\QQb}{\bar{\QQ}}
\newcommand{\shL}{\mathcal{L}}
\newcommand{\shLC}{\shL_{\CLambda}}
\newcommand{\shLQ}{\shL_{\QLambda}}
\newcommand{\shLk}{\shL_{\kLambda}}
\newcommand{\QLambda}{\QQ \lbrack \Lambda \rbrack}
\newcommand{\CLambda}{\CC \lbrack \Lambda \rbrack}
\newcommand{\kLambda}{k \lbrack \Lambda \rbrack}
\DeclareMathOperator{\Gal}{Gal}
\newcommand{\GalCQ}{\Gal(\CC/\QQ)}
\DeclareMathOperator{\ord}{ord}
\newcommand{\Lp}{L \lbrack p \rbrack}
\newcommand{\Eom}{E_{\omega}}



  \begin{document}




  \alphafootnotes
  \author[Christian Schnell]{Christian Schnell}

  \chapterauthor{Christian Schnell}



  \chapter[Torsion points on cohomology support loci]%
		{Torsion points on cohomology support loci:\\%
		from D-modules to Simpson's theorem}
	\chaptermark{Torsion points on cohomology support loci}

  \contributor{Christian Schnell
    \affiliation{Department of Mathematics,
		Stony Brook University,
      Stony Brook, NY 11794, USA}}

\begin{abstract}
We study cohomology support loci of regular holonomic $\Dmod$-modules on complex
abelian varieties, and obtain conditions under which each irreducible component of
such a locus contains a torsion point. One case is that both the $\Dmod$-module and the
corresponding perverse sheaf are defined over a number field; another case is that
the $\Dmod$-module underlies a graded-polarizable mixed Hodge module with a
$\ZZ$-structure.  As a consequence, we obtain a new proof for Simpson's result that
Green-Lazarsfeld sets are translates of subtori by torsion points.
\end{abstract}

\section{Overview of the paper}

\subsection{Introduction}

Let $X$ be a projective complex manifold. In their two influential
papers about the generic vanishing theorem \cite{GL1,GL2}, Green and Lazarsfeld
showed that the so-called \define{cohomology support loci}
\[
	\Sigma_m^{p,q}(X) = \menge{L \in \Pic^0(X)}{%
		\dim H^q \bigl( X, \Omega_X^p \tensor L \bigr) \geq m},
\]
are finite unions of translates of subtori of $\Pic^0(X)$. Beauville and Ca\-ta\-ne\-se
\cite{Beauville} conjectured that the translates are always by \emph{torsion
points}, and this was proved by Simpson \cite{Simpson} with the help
of the Gelfond-Schneider theorem from transcendental number theory.
There is also a proof using positive characteristic methods by
Pink and Roessler \cite{PinkRoessler}. 

Over the past ten years, the results of Green and Lazarsfeld have been reinterpreted
and generalized several times \cite{Hacon,PopaSchnell,Schnell}, and we now understand
that they are consequences of a general theory of holonomic $\Dmod$-modules on
abelian varieties.  The purpose of this paper is to investigate under what conditions
the result about torsion points on cohomology support loci remains true in that
setting. One application is a new proof for the conjecture by Beauville and Catanese
that does not use transcendental number theory or reduction to positive
characteristic. 

\begin{note}
In a recent preprint \cite{Wang}, Wang extends \theoremref{thm:main} to 
polarizable Hodge modules on compact complex tori; as a corollary, he proves the
conjecture of Beauville and Catanese for arbitrary compact K\"ahler manifolds.
\end{note}

\subsection{Cohomology support loci for D-modules}

Let $A$ be a complex abelian variety, and let $\Mmod$ be a regular holonomic
$\Dmod_A$-module; recall that a $\Dmod$-module is called \define{holonomic} if its
characteristic variety is a union of Lagrangian subvarieties of the cotangent
bundle. Denoting by $\Ash$ the moduli space of line bundles with integrable
connection on $A$, we define the \define{cohomology support loci} of $\Mmod$
as
\[
	S_m^k(A, \Mmod) = \Menge{(L, \nabla) \in \Ash}{\dim \HH^k \Bigl( A, 
		\DR_A \bigl( \Mmod \tensor_{\OA} (L, \nabla) \bigr) \Bigr) \geq m}
\]
for $k, m \in \ZZ$. It was shown in \cite[Theorem~2.2]{Schnell} that 
$S_m^k(A, \Mmod)$ is always a finite union of linear subvarieties of $\Ash$, in the
following sense.

\begin{definition}
A \define{linear subvariety} of $\Ash$ is any subset of the form 
\[
	(L, \nabla) \tensor \im \bigl( \fsh \colon \Bsh \to \Ash \bigr),
\]
for $f \colon A \to B$ a homomorphism of abelian varieties, and $(L, \nabla)$ a point
of $\Ash$. An \define{arithmetic subvariety} is a linear subvariety that contains a
torsion point.
\end{definition}

Moreover, the analogue of Simpson's theorem is true for semisimple regular holonomic
$\Dmod$-modules of geometric origin: for such $\Mmod$, every irreducible component of
$S_m^k(A, \Mmod)$ contains a torsion point. We shall generalize this result in two
directions:
\begin{enumerate}
\item Suppose that $\Mmod$ is regular holonomic, and that both $\Mmod$ and the
corresponding perverse sheaf $\DR_A(\Mmod)$ are defined over a number field. We shall
prove that the cohomology support loci of $\Mmod$ are finite unions of arithmetic
subvarieties; this is also predicted by Simpson's standard conjecture.
\item Suppose that $\Mmod$ underlies a graded-polarizable mixed Hodge module with
$\ZZ$-structure; for example, $\Mmod$ could be the intermediate extension of a
polarizable variation of Hodge structure with coefficients in $\ZZ$. We shall prove
that the cohomology support loci of $\Mmod$ are finite unions of arithmetic
subvarieties.
\end{enumerate}

\subsection{Simpson's standard conjecture}

In his article \cite{Simpson-RH}, Simpson proposed several conjectures about
regular holonomic systems of differential equations whose monodromy representation
is defined over a number field. The principal one is the so-called ``standard
conjecture''; restated in the language of regular holonomic $\Dmod$-modules and
perverse sheaves, it takes the following form.
\begin{conjecture} \label{conj:Simpson}
Let $\Mmod$ be a regular holonomic $\Dmod$-module on a smooth projective 
variety $X$, both defined over $\QQb$. If $\DR_X(\Mmod)$ is the complexification of a
perverse sheaf with coefficients in
$\QQb$, then $\Mmod$ is of geometric origin.
\end{conjecture}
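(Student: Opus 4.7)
The plan is to first reduce to the case where $\Mmod$ is simple. By the Riemann-Hilbert correspondence applied over $\QQb$, the pair $(\Mmod, \DR_X(\Mmod))$ decomposes compatibly with the $\QQb$-structure, so it suffices to treat a simple regular holonomic $\Dmod$-module whose corresponding intersection complex is defined over a number field $F \subset \QQb$. Restricting to the smooth open locus $U \subset X$ where $\Mmod$ is an integrable connection, the hypothesis becomes that the monodromy representation $\rho \colon \pi_1(U) \to \GL_n(\CC)$ is conjugate to one with image in $\GL_n(F)$, and in particular has finite $\Gal(\QQb/\QQ)$-orbit.

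The strategy I would pursue combines Galois-theoretic rigidity with nonabelian Hodge theory. Attaching a harmonic bundle to each Galois conjugate of $\rho$ via the Corlette-Simpson theorem, and using the existence of a canonical equivariant pairing coming from the $F$-structure, the goal is to construct a polarizable complex variation of Hodge structure on $U$ (or on a finite \'etale cover) such that the underlying local system contains $\rho$ as a direct summand. The number-field property should then be upgraded to an integral structure, at least after twisting, producing a $\ZZ$-variation of Hodge structure. This brings the problem into a setting that is at least structurally similar to the one handled in the main theorem of the paper.

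The hard part---indeed the reason this remains only a conjecture---is the very last step: passing from a complex variation of Hodge structure to \emph{geometric origin}, that is, realizing $\Mmod$ as a subquotient of some $\shH^i f_+ \QQ_Y$ for a proper morphism $f \colon Y \to X$ from a smooth variety. This is essentially Simpson's motivicity conjecture, which is open in general. For rigid local systems on curves, Katz's middle convolution algorithm supplies a geometric construction; for cohomologically rigid systems the work of Esnault-Groechenig yields integrality and a conjectural motivic lift; but outside such rigid situations no mechanism is available. Any genuine proof would presumably require a new input---conceivably via $p$-adic Hodge theory in the spirit of Fontaine-Mazur, or a refinement of the theory of period maps---to build an actual family of algebraic varieties out of the arithmetic Hodge-theoretic data produced above.
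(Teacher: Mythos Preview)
The statement you are attempting to prove is \conjectureref{conj:Simpson}, Simpson's standard conjecture. The paper does \emph{not} prove this; it is stated as a conjecture, and the surrounding text quotes Simpson himself as saying it is ``evidently impossible to prove with any methods which are now under consideration.'' There is therefore no proof in the paper to compare your proposal against.

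Your proposal is not a proof either, and to your credit you recognize this: the final paragraph correctly identifies that passing from a polarizable $\ZZ$-variation of Hodge structure to an object of geometric origin is exactly the open problem. The earlier steps (reduction to the simple case, restriction to the smooth locus, invoking nonabelian Hodge theory to produce a variation of Hodge structure) are reasonable heuristics, but they do not close the gap. What the paper actually does with \conjectureref{conj:Simpson} is extract a testable \emph{prediction}---that cohomology support loci of such $\Mmod$ on an abelian variety are finite unions of arithmetic subvarieties---and then verify that prediction directly (the unnumbered theorem immediately following the conjecture), using the $\QQb$-structure on both sides together with \cite[Theorem~3.3]{Simpson}. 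That verification does not go through geometric origin at all.
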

He points out that, ``there is certainly no more reason to believe it is true than to
believe the Hodge conjecture, and whether or not it is true, it is evidently
impossible to prove with any methods which are now under consideration. However, it
is an appropriate motivation for some easier particular examples, and it leads to
some conjectures which might in some cases be more tractable'' \cite[p.~372]{Simpson-RH}.

In the particular example of abelian varieties, \conjectureref{conj:Simpson} predicts
that when $\Mmod$ is a regular holonomic $\Dmod$-module with the properties described
in the conjecture, then the cohomology support loci of $\Mmod$ should be finite
unions of arithmetic subvarieties. Our first result -- actually a simple consequence
of \cite{Schnell} and an old theorem by Simpson \cite{Simpson} -- is that this
prediction is correct.

\begin{theorem}
Let $A$ be an abelian variety defined over $\QQb$, and let $\Mmod$ be a regular
holonomic $\Dmod_A$-module. If $\Mmod$ is defined over $\QQb$, and if 
$\DR_A(\Mmod)$ is the complexification of a perverse sheaf with coefficients in
$\QQb$, then all cohomology support loci $S_m^k(A, \Mmod)$ are finite unions of
arithmetic subvarieties of $\Ash$.
\end{theorem}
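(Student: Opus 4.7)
The plan is to combine the structure theorem \cite[Theorem~2.2]{Schnell}, which already gives $S_m^k(A, \Mmod) = \bigcup_i Z_i$ as a finite union of linear subvarieties $Z_i = (L_i, \nabla_i) \tensor \im \fsh_i$ with $f_i \colon A \to B_i$ a homomorphism of abelian varieties, with an old theorem of Simpson \cite{Simpson}; it then suffices to show that each $Z_i$ contains a torsion point.

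The key structural fact is that $\Ash$ carries two distinct $\QQ$-algebraic structures: the Dolbeault one, realising $\Ash$ as an algebraic extension of $\Pic^0(A)$ by $H^0(A, \OmA)$; and the Betti one, which via Riemann--Hilbert identifies $\Ash$ with the character variety $\Hom(\pi_1(A), \CCst)$, a $\QQ$-form of $\mathbb{G}_m^{2 \dim A}$. These structures agree as complex Lie groups but are genuinely different as $\QQ$-algebraic groups, and Simpson's theorem, proved via Gelfond--Schneider, asserts that a point of $\Ash$ is torsion if and only if it is $\QQb$-rational in both structures simultaneously.

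The two hypotheses of the theorem contribute one $\QQb$-structure each. Because $\Mmod$ is defined over $\QQb$, the hypercohomology $\HH^k(A, \DR_A(\Mmod \tensor (L, \nabla)))$ is computable algebraically over $\QQb$, so $S_m^k(A, \Mmod)$ is defined over $\QQb$ in the Dolbeault structure. Because $\DR_A(\Mmod)$ admits a $\QQb$-form as a perverse sheaf, the same locus is defined over $\QQb$ in the Betti structure. Each subgroup $\im \fsh_i$ is defined over $\QQ$ in both structures, so projecting $S_m^k(A, \Mmod)$ to the quotient $\Ash / \im \fsh_i$ produces a finite set of points that is $\QQb$-rational in each structure; the image of $Z_i$ is a single such point, necessarily torsion by Simpson's theorem, and any preimage in $Z_i$ is then a torsion point of $\Ash$.

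The main obstacle lives entirely inside Simpson's transcendence input; everything else in the plan is a formal bookkeeping exercise around the two $\QQ$-structures on $\Ash$ and their compatibility with the hypotheses on $\Mmod$ and $\DR_A(\Mmod)$. A secondary technical point is to justify the descent of the $\QQb$-structure of $S_m^k(A, \Mmod)$ to the quotient $\Ash / \im \fsh_i$, but this follows from the fact that $f_i$, and hence $\fsh_i$, is a morphism of $\QQ$-algebraic groups in both structures.
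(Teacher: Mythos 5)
Your core strategy coincides with the paper's: the two hypotheses on $\Mmod$ furnish $\QQb$-structures for $S_m^k(A,\Mmod)$ in the de~Rham and Betti realizations of the moduli space respectively, and Simpson's transcendence theorem then forces arithmeticity. So you have identified the right key input. However, there is a gap in the way you deploy it. You claim that projecting $S_m^k(A,\Mmod)$ to the quotient $\Ash / \im \fsh_i$ ``produces a finite set of points.'' This is false in general: $S_m^k(A,\Mmod)$ is a union of linear subvarieties $Z_j$ with possibly different linear parts, and for $j \neq i$ the image of $Z_j$ in $\Ash/\im\fsh_i$ can be positive-dimensional (only $Z_i$ itself is guaranteed to collapse to a point). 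Consequently you cannot conclude that the image of $Z_i$ is an isolated $\QQb$-rational point and directly apply the point-version of Simpson's theorem. The argument can be patched --- since the finite collection of components $\{Z_j\}$ is permuted by $\GalCQ$, each $Z_i$ is individually defined over $\QQb$ in both structures, so its image in the quotient is a genuine $\QQb$-point --- but this requires an extra observation you did not make.

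More to the point, the detour through the structure theorem of \cite{Schnell} and through quotient spaces is unnecessary. Simpson's Theorem~3.3 in \cite{Simpson} is stated for arbitrary closed subvarieties, not just points: a closed subvariety of the rank-one moduli space that is defined over $\QQb$ simultaneously in the Betti and de~Rham $\QQb$-structures is automatically a finite union of torsion translates of subtori. The paper's proof simply verifies the two $\QQb$-rationality hypotheses for $S_m^k(A,\Mmod)$ (using $\Char(A) = \Spec \QQ[\pi_1(A)]$ for the Betti side and the $\QQb$-rationality of $\Ash$ for the de~Rham side, linked by the biholomorphism $\Phi$ and the compatibility $\Phi(S_m^k(A,\Mmod)) = S_m^k(A,\DR_A(\Mmod))$ from \cite[Lemma~14.1]{Schnell}) and then invokes Theorem~3.3 once, obtaining the full conclusion in a single step with no need for a prior decomposition into linear pieces.
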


\begin{proof}
Let $\Char(A) = \Hom \bigl( \pi_1(A), \CCst \bigr)$ be the space of rank one
characters of the fundamental group; for a character $\rho \in \Char(A)$, we denote
by $\CCrho$ the corresponding local system of rank one. We define the
cohomology support loci of a constructible complex of $\CC$-vector spaces $K \in
\Dbc(\CC_A)$ to be the sets
\[
	S_m^k(A, K) = \Menge{\rho \in \Char(A)}{\dim \HH^k \bigl( A, K \tensor_{\CC}
		\CCrho \bigr) \geq m}.
\]
The correspondence between local systems and vector bundles with integrable
connection gives a biholomorphic mapping $\Phi \colon \Ash \to \Char(A)$; it takes a
point $(L, \nabla)$ to the local system of flat sections of $\nabla$. According to
\cite[Lemma~14.1]{Schnell}, the cohomology support loci satisfy
\[
	\Phi \bigl( S_m^k(A, \Mmod) \bigr) = S_m^k \bigl( A, \DR_A(\Mmod) \bigr).
\]
Note that $\Char(A)$ is an affine variety defined over $\QQ$; in our situation,
$\Ash$ is moreover a quasi-projective variety defined over $\QQb$, because the same is
true for $A$. The assumptions on the $\Dmod$-module $\Mmod$ imply that $S_m^k(A, \Mmod)
\subseteq \Ash$ is defined over $\QQb$, and that $S_m^k \bigl( A, \DR_A(\Mmod) \bigr)
\subseteq \Char(A)$ is defined over $\QQb$. We can now use
\cite[Theorem~3.3]{Simpson} to conclude that both must be finite unions of arithmetic
subvarieties.
\end{proof}

\subsection{Mixed Hodge modules with $\ZZ$-structure}

We now consider a much larger class of regular holonomic $\Dmod_A$-modules, namely
those that come from mixed Hodge modules with $\ZZ$-structure. This class includes,
for example, intermediate extensions of polarizable variations of Hodge structure
defined over $\ZZ$; the exact definition can be found in \definitionref{def:ZZ}
below. 

We denote by $\MHM(A)$ the category of graded-polarizable mixed Hodge modules
on the abelian variety $A$, and by $\Db \MHM(A)$ its bounded derived category
\cite[\S4]{Saito-MHM}; because $A$ is projective, every mixed Hodge module is
automatically algebraic. Let
\[
	\rat \colon \Db \MHM(A) \to \Dbc(\QQ_A)
\]
be the functor that takes a complex of mixed Hodge modules to the underlying
complex of constructible sheaves of $\QQ$-vector spaces; then a $\ZZ$-structure on $M
\in \Db \MHM(A)$ is a constructible complex $E \in \Dbc(\ZZ_A)$ with the property
that $\rat M \simeq \QQ \tensor_{\ZZ} E$. 

To simplify the notation, we shall define the \define{cohomology support loci} of a
complex of mixed Hodge modules $M \in \Db \MHM(A)$ as
\[
	S_m^k(A, M) = \menge{\rho \in \Char(A)}{%
		\dim H^k \bigl( A, \rat M \tensor_{\QQ} \CCrho \bigr) \geq m},
\]
where $k \in \ZZ$ and $m \geq 1$. Our second result is the following structure
theorem for these sets.

\begin{theorem} \label{thm:main}
If a complex of mixed Hodge modules $M \in \Db \MHM(A)$ admits a $\ZZ$-structure,
then all of its cohomology support loci $S_m^k(A, M)$ are complete unions of arithmetic
subvarieties of $\Char(A)$.
\end{theorem}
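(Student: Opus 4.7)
The plan is to combine the structure theorem \cite[Theorem~2.2]{Schnell}, which already shows that every irreducible component of $S_m^k(A, M)$ is a linear subvariety of $\Char(A)$, with new Hodge-theoretic and integrality input coming from the $\ZZ$-structure on $M$. The goal is to produce a torsion point on each such component while avoiding the transcendental \cite[Theorem~3.3]{Simpson} used to prove the first theorem.

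The first step is to reduce to a simple case. Using the compatibility of $\rat$ with the weight filtration and perverse truncations on $\MHM(A)$, and then Saito's decomposition by strict support, it suffices to treat a single simple pure polarizable Hodge module $M = \jlsl(V)$, the intermediate extension of a polarizable $\QQ$-variation of Hodge structure $V$ on a Zariski-open subset of a closed subvariety $Z \subseteq A$, carrying a compatible $\ZZ$-structure. The $\ZZ$-structure is preserved through these reductions at the cost of possibly replacing the lattice by a finite-index sublattice, which is harmless for the torsion conclusion. Fix now an irreducible component $T = \rho_0 \cdot \Phi(\im \fsh) \subseteq \Char(A)$ of a support locus of such a simple $M$. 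Verdier self-duality of the polarization, combined with the $\QQ$-structure, implies that the support loci are stable under $\rho \mapsto \bar\rho^{-1}$, so the intersection $T_{\RR}$ of $T$ with the compact unitary subtorus of $\Char(A)$ is nonempty and of full real dimension in $T$. On $T_{\RR}$, the twisted cohomology $H^k(A, \rat M \tensor \CCrho)$ carries a natural polarizable mixed Hodge structure, refined by the $\ZZ$-structure on $M$ to an integer lattice.

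The heart of the proof, and its main obstacle, is to pass from this family of polarized $\ZZ$-Hodge structures parametrized by $T_{\RR}$ to the conclusion that $\rho_0$ is torsion. My plan is to use Kronecker's theorem on algebraic integers of absolute value one: integrality of the lattice forces the monodromy of the underlying local systems along $T_{\RR}$ to lie in $\GL_n(\ZZ)$, while polarizability, via Hodge norm estimates, uniformly bounds the eigenvalues of this monodromy, and in particular bounds all their Galois conjugates. The eigenvalues are therefore algebraic integers all of whose conjugates have absolute value one, hence roots of unity; unwinding then forces the base character $\rho_0$ to be torsion as well. Making this heuristic precise, so that the Hodge-theoretic bounds genuinely control the full Galois orbit of $\rho_0$ rather than merely the monodromy at a single unitary point, is the delicate step, and it is where this Hodge-theoretic argument substitutes for Simpson's application of Gelfond--Schneider.
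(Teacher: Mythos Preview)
Your proposal has a genuine gap at its core. For a non-torsion unitary character $\rho$, the local system $\CCrho$ is not defined over $\QQ$, let alone $\ZZ$; consequently $H^k(A, \rat M \tensor_{\QQ} \CCrho)$ carries no natural integral lattice, and the phrase ``refined by the $\ZZ$-structure on $M$ to an integer lattice'' has no content. Likewise, ``monodromy along $T_{\RR}$'' is not a representation into $\GL_n(\ZZ)$: the points of $T_{\RR}$ parametrize \emph{different} rank-one local systems on $A$, and their cohomology groups do not assemble into a local system with integral structure over $T_{\RR}$. There is thus nothing integral to which Kronecker's theorem can be applied. (There is also a problem earlier: the $\ZZ$-structure need not pass through the weight filtration or the strict-support decomposition, since those $\QQ$-subobjects and summands have no a~priori integral model; finite-index sublattices do not help, because the issue is existence, not commensurability. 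The paper never performs such a decomposition for precisely this reason.)

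The paper's route is structurally different. It proceeds by induction on $\dim A$; positive-dimensional components are handled by pushing forward to a lower-dimensional quotient abelian variety. The real content is the case of an isolated point $\{\rho\}$. Here a convolution/Fourier--Mukai argument (after Kr\"amer--Weissauer and \cite{Schnell}) produces an auxiliary mixed Hodge module with $\ZZ$-structure in which $\rho$ is isolated in \emph{every} cohomology support locus; the Fourier--Mukai transform then splits off a skyscraper, forcing $\CC_{\rho^{-1}}\decal{\dim A}$ to embed into $\CC \tensor_{\QQ} \rat M$. Only now does Kronecker enter, via Lemma~\ref{lem:factor}: on the generic open set where $M$ restricts to a variation of mixed Hodge structure with $\ZZ$-coefficients, the values of $\rho$ are eigenvalues of an integral monodromy representation and are unitary by polarizability, and $\GalCQ$-stability of the support loci (Proposition~\ref{prop:Galois-Hodge}) controls all their conjugates. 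The integrality comes from the local system underlying $M$ itself, never from twisted cohomology.
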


\begin{definition} \label{def:complete}
A collection of arithmetic subvarieties of $\Char(A)$ is called \define{complete}
if it is a finite union of subsets of the form
\[
	\menge{\rho^k}{\gcd(k,n) = 1} \cdot 
		\im \bigl( \Char(f) \colon \Char(B) \to \Char(A) \bigr),
\]
where $\rho \in \Char(A)$ is a point of finite order $n$, and $f \colon A
\to B$ is a surjective morphism of abelian varieties with connected fibers.
\end{definition}

The proof of \theoremref{thm:main} occupies the remainder of the paper; it is by
induction on the dimension of the abelian variety. Since we already know that the
cohomology support loci are finite unions of linear subvarieties, the issue is to
prove that every irreducible component contains a torsion point. Four important ingredients
are the Fourier-Mukai transform for $\Dmod_A$-modules \cite{Laumon,Rothstein};
results about Fourier-Mukai transforms of holonomic $\Dmod_A$-modules \cite{Schnell};
the theory of perverse sheaves with integer coefficients \cite{BBD}; and of course
Saito's theory of mixed Hodge modules \cite{Saito-MHM}. Roughly speaking, they make
it possible to deduce the assertion about torsion points from the following
elementary special case: if $V$ is a graded-polarizable variation of mixed Hodge
structure on $A$ with coefficients in $\ZZ$, and if $\CCrho$ is a direct factor of $V
\tensor_{\ZZ} \CC$ for some $\rho \in \Char(A)$, then $\rho$ must be a torsion
character.  The completeness of the set of components follows from the fact that
$S_m^k(A, M)$ is defined over $\QQ$, hence stable under the natural $\GalCQ$-action;
note that the $\GalCQ$-orbit of a character $\rho$ of order $n$ consists exactly of
the characters $\rho^k$ with $\gcd(k,n) = 1$.

\subsection{The conjecture of Beauville and Catanese}

Now let $X$ be a projective complex manifold. As a consequence of
\theoremref{thm:main}, we obtain a purely analytic proof for the conjecture of
Beauville and Catanese. 

\begin{theorem} \label{thm:GL}
Each $\Sigma_m^{p,q}(X)$ is a finite union of subsets of the form $L \tensor T$,
where $L \in \Pic^0(X)$ is a point of finite order, and $T \subseteq \Pic^0(X)$ is a
subtorus.
\end{theorem}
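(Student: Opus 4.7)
The plan is to pull the question back from $X$ to $A = \Alb(X)$ via the Albanese morphism $a \colon X \to A$ and to invoke \theoremref{thm:main}. Pullback induces an isomorphism $a^\ast \colon \Pic^0(A) \xrightarrow{\sim} \Pic^0(X)$, so I view each $\Sigma_m^{p,q}(X)$ as a subset of $\Pic^0(A)$. The natural complex of mixed Hodge modules to feed into \theoremref{thm:main} is $M = a_\ast \QQ_X^H \decal{\dim X} \in \Db \MHM(A)$: its underlying perverse sheaf is $\derR a_\ast \QQ_X \decal{\dim X}$, and $\derR a_\ast \ZZ_X \decal{\dim X}$ provides a $\ZZ$-structure. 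Hence \theoremref{thm:main} asserts that every cohomology support locus $S_m^k(A, M)$ is a complete union of arithmetic subvarieties of $\Char(A)$.

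Since the Green--Lazarsfeld work already establishes that each $\Sigma_m^{p,q}(X)$ is a finite union of translates of subtori, what remains is to show that every irreducible component contains a torsion line bundle. The bridge from $\Char(A)$ back to $\Pic^0(A)$ is the real-analytic Deligne identification of $\Pic^0(A)$ with the unitary locus of $\Char(A)$, sending $L_0$ to its unique unitary flat connection; this is a homeomorphism of real tori which preserves both torsion points and subtori. Under it, the Hodge decomposition for unitary characters reads
\[
	\HH^k \bigl( A, \rat M \tensor_{\QQ} \CCrho \bigr)
		\;\cong\; H^k \bigl( X, a^\ast \CCrho \bigr)
		\;\cong\; \bigoplus_{p + q = k} H^q \bigl( X, \Omega_X^p \tensor a^\ast L_0 \bigr),
\]
for $\rho$ the unitary character attached to $L_0$. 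Thus each Hodge jump locus $\Sigma_m^{p,q}(X)$ is intimately tied to the total jump loci $S_{\bullet}^{p+q}(A, M)$, whose irreducible components are all torsion translates of subtori by \theoremref{thm:main}.

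The main obstacle is the separation of individual Hodge types: \theoremref{thm:main} sees only the total de Rham cohomology, whereas $\Sigma_m^{p,q}(X)$ is cut out by a single bigraded summand. I expect to address this by combining \theoremref{thm:main} with the strictness of the Hodge filtration on the $H^k(X, a^\ast \CCrho)$ and by applying \theoremref{thm:main} to additional mixed Hodge modules that isolate graded pieces of the Hodge filtration on the cohomology sheaves $\cohH^\bullet M$, each inheriting a compatible $\ZZ$-structure from $M$. Transferring the resulting torsion-translate structure along the unitary identification then upgrades the ``contains a torsion point'' property from $\Char(A)$ to $\Pic^0(X)$ for each component of $\Sigma_m^{p,q}(X)$.
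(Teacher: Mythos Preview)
Your setup is exactly the paper's: push forward $\QQ_X^H \decal{\dim X}$ along the Albanese map to get $M \in \Db\MHM(A)$, equip it with the $\ZZ$-structure $\derR a_\ast \ZZ_X \decal{\dim X}$, and apply \theoremref{thm:main} to conclude that each $S_m^k(A,M)$---equivalently each total cohomology jump locus $\Sigma_m^k(X) \subseteq \Char^0(X)$---is a finite union of arithmetic subvarieties. The paper then dispatches the passage from these to the individual Hodge loci $\Sigma_m^{p,q}(X) \subseteq \Pic^0(X)$ in one line, by invoking the argument of Arapura \cite[Theorem~3]{Arapura}: once the de~Rham loci $\Sigma_m^k$ are known to be torsion translates of subtori in $\Char^0(X)$, the same follows for the Dolbeault loci $\Sigma_m^{p,q}$ in $\Pic^0(X)$.

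This is precisely the ``main obstacle'' you identify, and your proposed workaround does not close the gap. You suggest applying \theoremref{thm:main} to ``additional mixed Hodge modules that isolate graded pieces of the Hodge filtration on $\cohH^\bullet M$, each inheriting a compatible $\ZZ$-structure.'' But the graded pieces $\gr_F^p$ are coherent $\OA$-modules, not mixed Hodge modules or perverse sheaves; they carry no $\ZZ$-structure, and \theoremref{thm:main} simply does not apply to them---its cohomology support loci live in $\Char(A)$, not in $\Pic^0(A)$. Likewise, strictness of the Hodge filtration on $H^k(X, a^\ast \CCrho)$ for unitary $\rho$ only tells you that $\Sigma_m^{p,q}(X)$ lies inside the unitary slice of some $S_{m'}^{p+q}(A,M)$; it does not force the irreducible components of the former to coincide with (unitary slices of) components of the latter, so you cannot directly transfer the torsion-point conclusion. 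The missing ingredient is genuinely Arapura's comparison of the de~Rham and Dolbeault jump loci; without it, or an equivalent argument, the proof is incomplete.
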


\begin{proof}
Inside the group $\Char(X)$ of rank one characters of the fundamental group of $X$, let
$\Char^0(X)$ denote the connected component of the trivial character. If $f \colon X
\to A$ is the Albanese morphism (for some choice of base point on $X$), then
$\Char(f) \colon \Char(A) \to \Char^0(X)$ is an isomorphism. As above, we denote the
local system corresponding to a character $\rho \in \Char(X)$ by the symbol $\CCrho$.
Define the auxiliary sets
\[
	\Sigma_m^k(X) = \menge{\rho \in \Char^0(X)}{\dim \HH^k \bigl( X, \CCrho \bigr) \geq m};
\]
by the same argument as in \cite[Theorem~3]{Arapura}, it suffices to prove that each
$\Sigma_m^k(X)$ is a finite union of arithmetic subvarieties of $\Char^0(X)$. But this
follows easily from \theoremref{thm:main}. To see why, consider the complex of mixed
Hodge modules $M = \fl \QQ_X^H \decal{\dim X} \in \Db \MHM(A)$. The underlying
constructible complex is $\rat M = \derR \fl \QQ \decal{\dim X}$, and so
\[
	\Sigma_m^{k + \dim X}(X) = \Char(f) \bigl( S_m^k(A, M) \bigr).
\]
Because $\derR \fl \ZZ \decal{\dim X}$ is a $\ZZ$-structure on $M$, the assertion is
an immediate consequence of \theoremref{thm:main}.
\end{proof}

For some time, I thought that each $\Sigma_m^{p,q}(X)$ might perhaps also be
complete in the sense of \definitionref{def:complete}, meaning a finite union of
subsets of the form
\[
	\menge{L^{\tensor k}}{\gcd(k,n) = 1} \tensor T,
\]
where $n$ is the order of $L$. Unfortunately, this is not the case.

\begin{example}
Here is an example of a surface $X$ where certain cohomology support loci are not
complete.  Let $A$ be an elliptic curve. Choose a nontrivial character $\rho \in
\Char(A)$ of order three, let $L = \CCrho \tensor_{\CC} \OA$, and let $B \to A$ be the \'etale
cover of degree three that trivializes $\rho$. The Galois group of this cover is $G =
\ZZ / 3 \ZZ$, and if we view $G$ as a quotient of $\pi_1(A, 0)$, then the three characters
of $G$ correspond exactly to $1, \rho, \rho^2$. Finally, let $\omega$ be a primitive
third root of unity, and let $\Eom$ be the elliptic curve with an automorphism of
order three. Now $G$ acts diagonally on the product $\Eom \times B$, and the quotient
is an isotrivial family of elliptic curves $f \colon X \to A$.  Let us consider the
variation of Hodge structure on the first cohomology groups of the fibers. Setting $H
= H^1(\Eom, \ZZ)$, the corresponding representation of the fundamental group of $A$
factors as
\[
	\pi_1(A, 0) \to G \to \Aut(H),
\]
and is induced by the $G$-action on $\Eom$. This representation is the direct sum of
the two characters $\rho$ and $\rho^2$, because $G$ acts as multiplication by
$\omega$ and $\omega^2$ on $H^{1,0}(\Eom)$ and $H^{0,1}(\Eom)$, respectively.
For the same reason, $\fl \omegaX \simeq L$ and $R^1 \fl \omegaX \simeq \OA$.
Since $\fu \colon \Pic^0(A) \to \Pic^0(X)$ is an isomorphism, the projection formula
gives
\[
	\Sigma_1^{2,0}(X) = \bigl\{ L^{-1} \bigr\} \quad \text{and} \quad
		\Sigma_1^{2,1}(X) = \bigl\{ L^{-1}, \OA \bigr\}.
\]
We conclude that not all cohomology support loci of $X$ are complete.
\end{example}

\begin{note}
Although he does not state his result in quite this form, Pareschi
\cite[Scholium~4.3]{Pareschi} shows that the set of positive-dimensional irreducible
components of 
\[
	V^0(\omegaX) = \Sigma_1^{\dim X,0}(X)
\] 
is complete, provided that $X$ has maximal Albanese dimension. 
\end{note}

\subsection{Acknowledgements}

This work was supported by the World Premier International Research Center Initiative
(WPI Initiative), MEXT, Japan, and by NSF grant DMS-1331641. I thank Nero Budur,
Fran\c{c}ois Charles, and Mihnea Popa for several useful conversations about the
topic of this paper, and Botong Wang for suggesting to work with arbitrary complexes
of mixed Hodge modules. I am indebted to an anonymous referee for suggestions
about the exposition. I also take this opportunity to thank Rob Lazarsfeld for the
great influence that his work with Mark Green and Lawrence Ein has had on my
mathematical interests.

\section{Preparation for the proof}

\subsection{Variations of Hodge structure}

In what follows, $A$ will always denote a complex abelian variety, and $g = \dim A$
its dimension. To prove \theoremref{thm:main}, we have to show that certain complex
numbers are roots of unity; we shall do this with the help of Kronecker's theorem,
which says that if all conjugates of an algebraic integer have absolute value
$1$, then it is a root of unity. To motivate what follows, let us consider the
simplest instance of \theoremref{thm:main}, namely a polarizable variation of Hodge
structure with coefficients in $\ZZ$. 

\begin{lemma} \label{lem:VHS}
If a local system with coefficients in $\ZZ$ underlies a polarizable variation of
Hodge structure on $A$, then it is a direct sum of torsion points of $\Char(A)$.
\end{lemma}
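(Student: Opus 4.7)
The strategy is to combine Deligne's semisimplicity theorem for polarizable variations of Hodge structure with Kronecker's theorem on algebraic integers of absolute value one. Write $V_{\ZZ}$ for the given $\ZZ$-local system, let $V_{\CC} = V_{\ZZ} \tensor_{\ZZ} \CC$, and fix a basepoint of $A$, so that the monodromy becomes a representation of $\pi_1(A)$ on $V_{\CC}$. By Deligne's theorem, the underlying $\QQ$-local system of a polarizable variation of Hodge structure is semisimple, and since $\pi_1(A)$ is abelian, every irreducible $\CC$-representation is one-dimensional; this gives a decomposition
\[
V_{\CC} = \bigoplus_{i=1}^n \CC_{\rho_i}
\]
for certain characters $\rho_1, \ldots, \rho_n \in \Char(A)$. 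The semisimplicity of the category of polarizable variations of Hodge structure moreover ensures that each $\CC_{\rho_i}$ underlies a sub-variation of pure Hodge type $(p_i, q_i)$.

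Next, I would argue that each $\rho_i$ is unitary. On a one-dimensional $\CC$-variation of Hodge structure, the polarization restricts to a flat Hermitian pairing between $\CC_{\rho_i}$ and its complex conjugate $\overline{\CC_{\rho_i}} = \CC_{\bar{\rho_i}}$, and the Hodge positivity makes the associated form positive definite, hence nonzero. Since such a flat pairing exists only if $\rho_i \cdot \bar{\rho_i}$ is the trivial character, this forces $\abs{\rho_i(\gamma)} = 1$ for every $\gamma \in \pi_1(A)$.

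The $\ZZ$-structure then allows one to finish via Kronecker. Each monodromy element $\gamma$ acts on $V_{\ZZ}$ as an integer matrix whose eigenvalues are precisely the complex numbers $\rho_1(\gamma), \ldots, \rho_n(\gamma)$, so each $\rho_i(\gamma)$ is an algebraic integer; moreover its full Galois orbit is contained in this multi-set, as the characteristic polynomial has rational coefficients. By the previous step, every Galois conjugate of $\rho_i(\gamma)$ has absolute value one, and so Kronecker's theorem implies that $\rho_i(\gamma)$ is a root of unity. Since $\pi_1(A) \cong \ZZ^{2g}$ is finitely generated, this forces $\rho_i$ to be of finite order in $\Char(A)$.

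The main obstacle is the unitarity step. Because the monodromy does not preserve the Hodge filtration, there is no reason to expect the eigenspace decomposition $V_{\CC} = \bigoplus \CC_{\rho_i}$ to be compatible with the Hodge filtration at a single fiber, so one cannot simply diagonalize $\gamma$ on Hodge types. The argument instead has to rely on the categorical form of Deligne's semisimplicity, so that each character is actually cut out by a one-dimensional sub-variation of Hodge structure; only then does the polarization give unitarity of each character directly.
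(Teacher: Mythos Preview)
Your proof is correct and follows the same strategy as the paper: Deligne's semisimplicity gives a decomposition into characters, the polarization forces each character to be unitary, and then the $\ZZ$-structure plus Kronecker's theorem shows the characters are torsion. The paper compresses the unitarity step into a single clause (``the existence of a polarization implies that it is isomorphic to a direct sum of unitary characters''), whereas you spell out carefully why this requires each $\CC_{\rho_i}$ to carry its own polarizable $\CC$-VHS structure and not merely to be a summand of the local system---a genuine subtlety that the paper glosses over.
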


\begin{proof}
The associated monodromy representation $\mu \colon \pi_1(A) \to \GL_n(\ZZ)$,
tensored by $\CC$, is semisimple \cite[\S4.2]{Deligne-II}; the existence of a
polarization implies that it is isomorphic to a direct sum of unitary characters of
$\pi_1(A)$. Since $\mu$ is defined over $\ZZ$, the collection of these characters is
preserved by the action of $\GalCQ$. This means that the values of each character, as
well as all their conjugates, are algebraic integers of absolute value $1$; by
Kronecker's theorem, they must be roots of unity. It follows that $\mu$ is a direct
sum of torsion characters.
\end{proof}

\begin{corollary} \label{cor:VHS}
Let $V$ be a local system of $\CC$-vector spaces on $A$. If $V$ underlies a
polarizable variation of Hodge structure with coefficients in $\ZZ$, 
all cohomology support loci of $V$ are finite unions of arithmetic subvarieties.
\end{corollary}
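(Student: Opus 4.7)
The proof should be an immediate consequence of \lemmaref{lem:VHS}. Let $V_{\ZZ}$ denote a $\ZZ$-local system underlying the polarizable variation of Hodge structure, so that $V \simeq V_{\ZZ} \tensor_{\ZZ} \CC$ as local systems of $\CC$-vector spaces. By the lemma, the complexified monodromy representation decomposes as
\[
    V \simeq \bigoplus_{i=1}^r \CC_{\rho_i},
\]
where $\rho_1, \ldots, \rho_r \in \Char(A)$ are torsion characters (possibly with repetitions).

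The next step is to compute the cohomology support loci directly from this decomposition. For any $\rho \in \Char(A)$,
\[
    H^k \bigl( A, V \tensor_{\CC} \CCrho \bigr) \simeq \bigoplus_{i=1}^r H^k \bigl( A, \CC_{\rho_i \rho} \bigr),
\]
so the question reduces to the cohomology of rank-one local systems on $A$. Since $A$ is a $K(\pi,1)$ with $\pi = \pi_1(A)$ free abelian of rank $2g$, I would compute this via the Koszul complex: it is exact whenever the character $\chi$ on $\pi$ is nontrivial, so $H^k(A, \CC_{\chi}) = 0$ for every $k$ unless $\chi = 1$, in which case $H^k(A, \CC) \simeq \bigwedge^k \CC^{2g}$. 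Consequently $H^k(A, \CC_{\rho_i \rho})$ is nonzero precisely when $\rho = \rho_i^{-1}$, and in that case has dimension $\binom{2g}{k}$.

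Assembling these observations, $S_m^k(A, V)$ is the finite set of torsion characters $\rho_i^{-1}$ whose multiplicity $m_i = \#\menge{j}{\rho_j = \rho_i}$ satisfies $m_i \cdot \binom{2g}{k} \geq m$. Each such point is a zero-dimensional arithmetic subvariety of $\Char(A)$, and there are only finitely many of them, so $S_m^k(A, V)$ is trivially a finite union of arithmetic subvarieties. There is no real obstacle: the substantive content is already inside \lemmaref{lem:VHS}, and the corollary is a matter of bookkeeping once the decomposition into torsion characters is in hand, combined with the elementary vanishing of cohomology of nontrivial rank-one local systems on an abelian variety.
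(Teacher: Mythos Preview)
Your proof is correct and follows the same approach as the paper: decompose $V$ via \lemmaref{lem:VHS} into torsion characters, so that every cohomology support locus is contained in the finite set $\{\rho_1^{-1}, \dotsc, \rho_r^{-1}\}$ of torsion points. The paper's proof is terser---it simply asserts this containment as obvious---whereas you spell out the vanishing of $H^k(A, \CC_{\chi})$ for nontrivial $\chi$ and track multiplicities; this extra detail is fine but not needed for the stated conclusion.
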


\begin{proof}
By Lemma~\ref{lem:VHS}, we have $V \simeq \CC_{\rho_1} \oplus \dotsb \oplus
\CC_{\rho_n}$ for torsion points $\rho_1, \dotsc, \rho_n \in \Char(A)$. All
cohomology support loci of $V$ are then obviously contained in the set
\[
	\bigl\{ \rho_1^{-1}, \dotsc, \rho_n^{-1} \bigr\},
\]
and are therefore trivially finite unions of arithmetic subvarieties.
\end{proof}

\subsection{Mixed Hodge modules with $\ZZ$-structure}

We shall say that a mixed Hodge module has a $\ZZ$-structure if the underlying
perverse sheaf, considered as a constructible complex with coefficients in $\QQ$, can
be obtained by extension of scalars from a constructible complex with coefficients in
$\ZZ$. A typical example is the intermediate extension of a variation of Hodge
structure with coefficients in $\ZZ$. To be precise, we make the following definition.

\begin{definition} \label{def:ZZ}
A \define{$\ZZ$-structure} on a complex of mixed Hodge modules 
\[
	M \in \Db \MHM(A)
\]
is a constructible complex $E \in \Dbc(\ZZ_A)$ such that $\rat M \simeq \QQ
\tensor_{\ZZ} E$.
\end{definition}

The standard operations on complexes of mixed Hodge modules clearly respect
$\ZZ$-structures. For instance, suppose that $M \in \Db \MHM(A)$ has a
$\ZZ$-structure, and that $f \colon A \to B$ is a homomorphism of abelian varieties; then
$\fl M \in \Db \MHM(B)$ again has a $\ZZ$-structure. The proof is straightforward:
\[
	\rat \bigl( \fl M \bigr) = \derR \fl(\rat M)
		\simeq \derR \fl \bigl( \QQ \tensor_{\ZZ} E \bigr)
		\simeq \QQ \tensor_{\ZZ} \derR \fl E
\]
 
By \cite[Section~3.3]{BBD} and \cite{Juteau}, there are two natural perverse
t-structures on the category $\Dbc(\ZZ_A)$; after tensoring by $\QQ$, both become
equal to the usual perverse t-structure on $\Dbc(\QQ_A)$. We shall use the one
corresponding to the perversity $p_+$; concretely, it is defined as follows:
\[
	E \in \Dtppc{\leq 0}(\ZZ_A) \Longleftrightarrow
	\Bigg\{ \, \parbox[c]{.5\textwidth}{%
		for any stratum $S$, the local system \\
      $\shH^m \iu_S E$ is zero if $m > - \dim S + 1$, \\
		and $\QQ \tensor_{\ZZ} \shH^{-\dim S + 1} \iu_S E = 0$}
\]
\[
	E \in \Dtppc{\geq 0}(\ZZ_A) \Longleftrightarrow
	\Bigg\{ \, \parbox[c]{.5\textwidth}{%
		for any stratum $S$, the local system \\
      $\shH^m \ius_S E$ is zero if $m < - \dim S$, \\
		and $\shH^{-\dim S} \ius_S E$ is torsion-free}
\]
We can use the resulting formalism of perverse sheaves with integer coefficients to
show that $\ZZ$-structures are also preserved under taking cohomology.

\begin{lemma} \label{lem:shH}
If $M \in \Db \MHM(A)$ admits a $\ZZ$-structure, then each cohomology module
$\shH^k(M) \in \MHM(A)$ also admits a $\ZZ$-structure.
\end{lemma}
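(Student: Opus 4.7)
The plan is to produce an explicit candidate $\ZZ$-structure on $\shH^k(M)$ directly from the one on $M$, by taking perverse cohomology with respect to the $p_+$ t-structure on $\Dbc(\ZZ_A)$. So let $E \in \Dbc(\ZZ_A)$ be a $\ZZ$-structure on $M$, i.e.\ $\rat M \simeq \QQ \tensor_{\ZZ} E$, and set $E^k := {^{p_+}}\shH^k(E) \in \Dbc(\ZZ_A)$, using the t-structure recalled just before the lemma. I claim this is the desired $\ZZ$-structure on $\shH^k(M)$.

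By definition, the assertion to prove is that $\rat \shH^k(M) \simeq \QQ \tensor_{\ZZ} E^k$. Since the functor $\rat \colon \Db \MHM(A) \to \Dbc(\QQ_A)$ is t-exact for the perverse t-structures by construction of mixed Hodge modules, we have
\[
	\rat \shH^k(M) \simeq {^p}\shH^k\bigl(\rat M\bigr) \simeq {^p}\shH^k\bigl(\QQ \tensor_{\ZZ} E\bigr).
\]
Thus the lemma reduces to the identity ${^p}\shH^k(\QQ \tensor_{\ZZ} E) \simeq \QQ \tensor_{\ZZ} {^{p_+}}\shH^k(E)$, i.e., to the statement that the exact extension-of-scalars functor $\QQ \tensor_{\ZZ} (-)$ is t-exact from the $p_+$ t-structure on $\Dbc(\ZZ_A)$ to the usual perverse t-structure on $\Dbc(\QQ_A)$, and hence commutes with cohomology.

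This t-exactness, which will be the main verification, is immediate from the characterizations of $\Dtppc{\leq 0}(\ZZ_A)$ and $\Dtppc{\geq 0}(\ZZ_A)$ quoted in the excerpt. Indeed, $\QQ$ is flat over $\ZZ$, so tensoring with $\QQ$ commutes with $\iu_S$, $\ius_S$, and with taking sheaf cohomology. For the connective side, the condition that $\shH^m \iu_S E = 0$ for $m > -\dim S + 1$ and that $\QQ \tensor_{\ZZ} \shH^{-\dim S+1} \iu_S E = 0$ says exactly that $\shH^m \iu_S(\QQ \tensor_{\ZZ} E) = 0$ for all $m > -\dim S$, which is the $\leq 0$ part of the classical perverse t-structure on $\Dbc(\QQ_A)$. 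The coconnective side is analogous: killing the torsion of $\shH^{-\dim S} \ius_S E$ has no effect after tensoring with $\QQ$, and the remaining vanishing conditions match up. Thus $\QQ \tensor_{\ZZ} (-)$ is t-exact, and the computation
\[
	\QQ \tensor_{\ZZ} E^k = \QQ \tensor_{\ZZ} {^{p_+}}\shH^k(E) \simeq {^p}\shH^k\bigl(\QQ \tensor_{\ZZ} E\bigr) \simeq \rat \shH^k(M)
\]
exhibits $E^k$ as a $\ZZ$-structure on $\shH^k(M)$.

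The only genuine obstacle is the t-exactness check, and even that is essentially built into the definition of the $p_+$ perversity chosen by BBD and Juteau: the torsion slack at the critical degree in the $\leq 0$ condition is precisely what disappears upon rationalization. Everything else is formal from the flatness of $\QQ$ over $\ZZ$ and the t-exactness of $\rat$.
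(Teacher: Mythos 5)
Your argument is correct and is essentially the paper's own proof: take $E^k = {}^{p_+}\shH^k(E)$ as the candidate $\ZZ$-structure, use t-exactness of $\rat$, and invoke the fact that after rationalization the $p_+$ t-structure on $\Dbc(\ZZ_A)$ agrees with the usual perverse t-structure on $\Dbc(\QQ_A)$. The only difference is that you spell out the t-exactness of $\QQ\tensor_{\ZZ}(-)$ from the explicit description of the $p_+$ perversity, whereas the paper simply cites the compatibility statement (recalled just before the lemma) from BBD and Juteau.
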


\begin{proof}
Let $\ppshH^k(E)$ denote the $p_+$-perverse cohomology sheaf in degree $k$ of the
constructible complex $E \in \Dbc(\ZZ_A)$. With this notation, we have
\[
	\rat \shH^k(M) = \pshH^k(\rat M) \simeq \pshH^k \bigl( \QQ \tensor_{\ZZ} E \bigr) 
		\simeq \QQ \tensor_{\ZZ} \ppshH^k(E),
\]
which gives the desired $\ZZ$-structure on $\shH^k(M)$.
\end{proof}

There is also a notion of intermediate extension for local systems with integer
coefficients. If $i \colon X \into A$ is a subvariety of $A$, and $j \colon U \into X$ is
a Zariski-open subset of the smooth locus of $X$, then for any local system $V$ on
$U$ with coefficients in $\ZZ$, one has a canonically defined $p_+$-perverse sheaf
\[
	\il \bigl( \jlsl V \decal{\dim X} \bigr) \in 
		\Dtppc{\leq 0}(\ZZ_A) \cap \Dtppc{\geq 0}(\ZZ_A).
\]
After tensoring by $\QQ$, it becomes isomorphic to the usual intermediate extension
of the local system $\QQ \tensor_{\ZZ} V$. This has the following immediate
consequence.

\begin{lemma} \label{lem:IC}
Let $M$ be a polarizable Hodge module. Suppose that $M$ is the intermediate extension
of $\QQ \tensor_{\ZZ} V$, where $V$ is a polarizable variation of Hodge structure
with coefficients in $\ZZ$. Then $M$ admits a $\ZZ$-structure.
\end{lemma}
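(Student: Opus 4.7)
The plan is to apply directly the construction of intermediate extension for perverse sheaves with integer coefficients recalled in the paragraph immediately preceding the lemma. Let $X \subseteq A$ be the support of $M$, let $j \colon U \into X$ be a Zariski-open subset of the smooth locus of $X$ over which the polarizable variation of Hodge structure $V$ is a local system with coefficients in $\ZZ$, and let $i \colon X \into A$ be the inclusion. Define
\[
	E = \il \bigl( \jlsl V \decal{\dim X} \bigr) \in \Dbc(\ZZ_A).
\]
By \cite{BBD} and \cite{Juteau}, $E$ is a $p_+$-perverse sheaf with integer coefficients, and its extension of scalars $\QQ \tensor_\ZZ E$ is canonically isomorphic to the usual intermediate extension of $\QQ \tensor_\ZZ V \decal{\dim X}$ in the perverse t-structure on $\Dbc(\QQ_A)$.

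To conclude, I would observe that $\rat M$ is, by hypothesis, precisely the $\QQ$-intermediate extension of $\QQ \tensor_\ZZ V \decal{\dim X}$. Combining this with the compatibility of the two intermediate extensions under extension of scalars produces an isomorphism $\rat M \simeq \QQ \tensor_\ZZ E$, which is exactly the data of a $\ZZ$-structure on $M$ in the sense of \definitionref{def:ZZ}.

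The proof is therefore essentially a citation of the $\ZZ$-coefficient intermediate extension formalism. The one point that warrants care is the commutation of the intermediate extension with $\QQ \tensor_\ZZ(-)$; this rests on the fact that the functors $\il$ and $\jlsl$ for the $p_+$-perversity are designed precisely so that tensoring with $\QQ$ turns the $p_+$ t-structure into the standard perverse t-structure and intertwines the two intermediate extensions. Since this is exactly what is guaranteed by the BBD--Juteau theory, no additional argument is needed. I do not foresee any substantive obstacle.
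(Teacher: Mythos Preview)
Your proposal is correct and follows exactly the same approach as the paper: the paper's proof is the single sentence ``In fact, $E = \il \bigl( \jlsl V \decal{\dim X} \bigr)$ gives a $\ZZ$-structure on $M$.'' Your version simply spells out the compatibility of the $p_+$-intermediate extension with $\QQ \tensor_{\ZZ}(-)$ that the paper takes for granted.
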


\begin{proof}
In fact, $E = \il \bigl( \jlsl V \decal{\dim X} \bigr)$ gives a $\ZZ$-structure on $M$.
\end{proof}

We conclude our discussion of $\ZZ$-structures by improving \lemmaref{lem:VHS}.

\begin{lemma} \label{lem:factor}
Let $M$ be a mixed Hodge module with $\ZZ$-structure. Let $\rho \in \Char(A)$ be a
character with the property that, for all $g \in \GalCQ$, the local system $\CC_{g
\rho} \decal{\dim A}$ is a subobject of $\CC \tensor_{\QQ} \rat M$. Then $\rho$ is
a torsion point of $\Char(A)$.
\end{lemma}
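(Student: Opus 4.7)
The plan is to apply Kronecker's theorem to each value $\rho(\gamma)$ for $\gamma \in \pi_1(A)$: if every $\rho(\gamma)$ is an algebraic integer and every Galois conjugate of $\rho(\gamma)$ has absolute value $1$, then $\rho(\gamma)$ is a root of unity. Since $\pi_1(A)$ is finitely generated and abelian, this forces $\rho$ to factor through a finite cyclic quotient, hence to be a torsion point of $\Char(A)$. Integrality will come from the $\ZZ$-structure on $M$; the absolute value bound for each conjugate $g(\rho(\gamma))$ will come from graded-polarizability, together with the hypothesis that the twist $\CC_{g\rho}\decal{\dim A}$ also embeds into $\CC \otimes_\QQ \rat M$.

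For integrality I would proceed as follows. Since $\CC_\rho\decal{\dim A}$ is a simple subobject of $\CC \otimes_\QQ \rat M$ with support equal to $A$, the perverse sheaf $\rat M$ has full support. I would then choose a connected dense open $U \subseteq A$ small enough that $\shH^k(E\vert_U)$ is a torsion local system for every $k \neq -\dim A$; after tensoring with $\CC$ these torsion pieces vanish, so $\CC \otimes_\ZZ E\vert_U \simeq (\CC \otimes_\ZZ L)\decal{\dim A}$ for the genuine $\ZZ$-local system $L = \shH^{-\dim A}(E\vert_U)$. Restricting the given embedding to $U$ yields an inclusion $\CC_\rho\vert_U \hookrightarrow \CC \otimes_\ZZ L$, so for any $\gamma \in \pi_1(A)$ lifted to $\pi_1(U)$, $\rho(\gamma)$ is an eigenvalue of the integer monodromy matrix of $L$ at that lift, hence an algebraic integer.

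For the absolute value bound, fix $g \in \GalCQ$ and set $\chi = g\rho$; by hypothesis $\CC_\chi\decal{\dim A}$ embeds in $\CC \otimes_\QQ \rat M$. Because the weight filtration on $M$ is defined over $\QQ$, a minimality argument (take the smallest $k$ for which the image in $\CC \otimes_\QQ \rat W_k M$ is nonzero and use simplicity of $\CC_\chi\decal{\dim A}$) shows that $\CC_\chi\decal{\dim A}$ already embeds into $\CC \otimes_\QQ \rat \gr^W_k M$. Since $\gr^W_k M$ is a polarizable pure Hodge module, it is semisimple, so the embedding splits as a direct summand; and the full-support condition on $\CC_\chi\decal{\dim A}$ forces the ambient simple $\QQ$-summand of $\rat \gr^W_k M$ to be of the form $V\decal{\dim A}$ for a $\QQ$-local system $V$ on all of $A$ underlying a polarizable variation of Hodge structure. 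Then $\CC_\chi$ is a sub-local system of $\CC \otimes_\QQ V$, and the polarization argument used in the proof of \lemmaref{lem:VHS} shows that every rank-one $\CC$-summand of a polarizable VHS on $A$ is a unitary character, yielding $\abs{g(\rho(\gamma))} = \abs{\chi(\gamma)} = 1$ for every $\gamma$.

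The main obstacle I anticipate is the descent carried out in the previous paragraph: I have to verify that the simple $\QQ$-perverse summand through which $\CC_\chi\decal{\dim A}$ factors really is the intermediate extension of a smooth variation of Hodge structure from all of $A$ rather than from a proper subvariety, which uses the full-support property of $\CC_\chi\decal{\dim A}$ together with the classification of simple constituents of a polarizable Hodge module as intermediate extensions of polarizable variations on strata. Once this descent to an honest polarizable VHS on $A$ is in hand, combining the outputs of the second and third paragraphs with Kronecker's theorem finishes the proof.
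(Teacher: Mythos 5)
Your overall strategy agrees with the paper's: get unitarity of each Galois conjugate $g\rho$ from polarizability, get integrality from the $\ZZ$-structure restricted to a generic open, and finish with Kronecker's theorem. The integrality step is essentially identical to the paper's (the paper restricts to the maximal open $j\colon U\into A$ where $\rat M$ is a shifted local system, takes $\shH^{-\dim A}\ju E$, and embeds $\ju\CC_\rho$ into its complexification — same argument in slightly different packaging, and your observation that $\CC\tensor_\ZZ(\argbl)$ kills the torsion sheaves is the right one).

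Where you diverge is the unitarity step, and your route is considerably longer than necessary. You reduce to a pure Hodge module via a minimal-weight argument, invoke semisimplicity of polarizable pure Hodge modules, and then face what you call the ``main obstacle'': showing the ambient simple $\QQ$-summand through which $\CC_\chi\decal{\dim A}$ factors is a smooth VHS on all of $A$. That descent claim is in fact correct (all the $\CC$-simple constituents of the complexified summand are Galois conjugate to $\CC_\chi\decal{\dim A}$, hence smooth of full support, so the $\QQ$-intermediate extension is a shifted local system on $A$), but it is entirely avoidable. The paper's proof never passes to the associated graded: it restricts $M$ to the maximal open $U$ on which $\ju M = V\decal{\dim A}$ for a \emph{graded-polarizable variation of mixed} Hodge structure $V$, notes that $\ju\CC_\rho$ embeds into the complex VMHS $\CC\tensor_\QQ V$, and cites Deligne \cite[\S1.12]{Deligne} directly for unitarity of any rank-one summand of a graded-polarizable complex VMHS, using the surjectivity of $\pi_1(U)\onto\pi_1(A)$. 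That same surjectivity would also let you run the polarization argument on $U$ without insisting that the variation extend over all of $A$, so the descent you worried about is a non-issue once you work on a dense open. In short: your proof works, but the weight-filtration reduction and the full-support descent are detours around a step that Deligne's lemma handles in one line.
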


\begin{proof}
Let $j \colon U \into A$ be the maximal open subset with the property that $\ju M = V
\decal{\dim A}$ for a graded-polarizable variation of mixed Hodge structure $V$.
Consequently, $\ju \CCrho$ embeds into the complex variation of mixed Hodge
structure $\CC \tensor_{\QQ} V$. Since the variation is graded-polarizable, and since
$\jl \colon \pi_1(U) \to \pi_1(A)$ is surjective, it follows that $\rho$ must be
unitary \cite[\S1.12]{Deligne}. On the other hand, we have $\rat M \simeq \QQ \tensor_{\ZZ}
E$ for a constructible complex $E$ with coefficients in $\ZZ$. Then $H^{-\dim A} \ju
E$ is a local system with coefficients in $\ZZ$, and $\ju \CCrho$ embeds into
its complexification. The values of the character $\rho$ are therefore algebraic
integers of absolute value $1$. We get the same conclusion for all their conjugates,
by applying the argument above to the characters $g \rho$, for $g \in \GalCQ$. Now
Kronecker's theorem shows that $\rho$ takes values in the roots of unity, and is
therefore a torsion point of $\Char(A)$.  
\end{proof}

\subsection{The Galois action on the space of characters}

In this section, we study the natural action of $\GalCQ$ on the space of
characters, and observe that the cohomology support loci of a regular holonomic
$\Dmod$-module with $\QQ$-structure are stable under this action.

The space of characters $\Char(A)$ is an affine algebraic variety, and its coordinate
ring is easy to describe. We have $A = V / \Lambda$, where $V$ is a complex
vector space of dimension $g$, and $\Lambda \subseteq V$ is a lattice of rank $2g$;
note that $\Lambda$ is canonically isomorphic to the fundamental group
$\pi_1(A, 0)$. For a field $k$, we denote by
\[
	\kLambda = \bigoplus_{\lambda \in \Lambda} k e_{\lambda}
\]
the group ring of $\Lambda$ with coefficients in $k$; the product is determined by
$e_{\lambda} e_{\mu} = e_{\lambda + \mu}$. As a complex algebraic variety,
$\Char(A)$ is isomorphic to $\Spec \CLambda$; in particular, $\Char(A)$ can already be
defined over $\Spec \QQ$, and therefore carries in a natural way an action of the
Galois group $\GalCQ$.

\begin{proposition} \label{prop:Galois-Hodge}
Let $M \in \Db \MHM(A)$ be a complex of mixed Hodge modules on a complex abelian
variety $A$. Then all cohomology support loci of $\rat M$ are stable under the action
of $\GalCQ$ on $\Char(A)$.  
\end{proposition}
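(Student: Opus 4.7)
The plan is to exploit, for each $g \in \GalCQ$, a scalar-extension endofunctor of $\Dbc(\CC_A)$ that fixes every $\QQ$-defined complex and carries the character local system $\CCrho$ to $\CC_{g\rho}$; combined with the fact that scalar extension preserves cohomology dimensions, this will immediately force Galois stability of $S_m^k(A, \rat M)$.

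Concretely, for each $g \in \GalCQ$ I would introduce the functor $g_* \colon \Dbc(\CC_A) \to \Dbc(\CC_A)$ defined by $g_* F = F \otimes_{\CC, g} \CC$, where the base ring map is the isomorphism $g \colon \CC \to \CC$. This functor is exact, commutes with tensor products and with $\derR \Gamma(A, \argbl)$, and preserves the $\CC$-dimension of every finite-dimensional $\CC$-vector space because $g$ is a ring automorphism. For any constructible complex $K$ of $\QQ$-vector spaces, the canonical isomorphism $\CC \otimes_{\CC, g} \CC \simeq \CC$ yields $g_*(K \otimes_{\QQ} \CC) \simeq K \otimes_{\QQ} \CC$, while a direct monodromy computation identifies $g_* \CCrho$ with the local system $\CC_{g\rho}$, where $g\rho \in \Char(A)$ is the character $\lambda \mapsto g(\rho(\lambda))$.

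Combining these observations, for every $\rho \in \Char(A)$ one obtains
\[
g_* \bigl( \rat M \otimes_{\QQ} \CCrho \bigr) \simeq \rat M \otimes_{\QQ} \CC_{g\rho},
\]
so after applying $\HH^k(A, \argbl)$ and using that $g_*$ preserves $\CC$-dimension I would conclude that $\dim \HH^k(A, \rat M \otimes_{\QQ} \CC_{g\rho}) = \dim \HH^k(A, \rat M \otimes_{\QQ} \CCrho)$. Hence $\rho$ lies in $S_m^k(A, \rat M)$ if and only if $g\rho$ does, which is exactly the stability under $\GalCQ$. The only step of substance is the identification $g_* \CCrho \simeq \CC_{g\rho}$; it follows from writing $\CCrho$ as the trivial line bundle with monodromy $\rho \colon \Lambda \to \CCst$ and noting that scalar extension along $g$ replaces each monodromy value by its image under $g$. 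No real obstacle is expected.
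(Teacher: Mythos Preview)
Your argument is correct. You directly verify Galois stability pointwise: for each $g \in \GalCQ$ the scalar-extension functor $g_{\ast}$ along the field automorphism $g$ is an exact tensor equivalence of $\Dbc(\CC_A)$ that commutes with $\derR\Gamma$, fixes anything pulled up from $\QQ$, sends $\CCrho$ to $\CC_{g\rho}$, and preserves $\CC$-dimension; this immediately gives $\dim \HH^k(A,\rat M \otimes_{\QQ} \CCrho) = \dim \HH^k(A,\rat M \otimes_{\QQ} \CC_{g\rho})$.

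The paper takes a different route. Rather than twisting by one character at a time, it packages all characters into the tautological $\CLambda$-local system $\shLC$ and observes that the cohomology support loci are cut out, as subschemes of $\Char(A) = \Spec \CLambda$, by the Fitting ideals of the complex $\derR p_{\ast}(\rat M \otimes_{\QQ} \shLC)$. Since $\rat M$ has $\QQ$-coefficients, this complex descends to $\QLambda$, so the loci are defined over $\QQ$ and hence Galois-stable. The two arguments encode the same underlying fact (the input is $\QQ$-rational, hence so is the output), but yours is more elementary and entirely self-contained, while the paper's version plugs into the Fourier--Mukai formalism already in use and yields the slightly stronger scheme-theoretic statement that the $S_m^k$ are defined over $\QQ$ (not merely $\GalCQ$-stable as sets).
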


\begin{proof}
The natural $\Lambda$-action on the group ring $\kLambda$ gives rise to a local
system of $k$-vector spaces $\shLk$ on the abelian variety. The discussion in
\cite[Section~14]{Schnell} shows that the cohomology support loci of $\rat M$ are
computed by the complex
\[
	\derR \pl \bigl( \rat M \tensor_{\QQ} \shLC \bigr) \in \Dbcoh \bigl( \CLambda \bigr),
\]
where $p \colon A \to \pt$ denotes the morphism to a point. In the case at hand, 
\[
	\derR \pl \bigl( \rat M \tensor_{\QQ} \shLC \bigr)
		\simeq \derR \pl \bigl( \rat M \tensor_{\QQ} \shLQ \bigr) 
		\tensor_{\QLambda} \CLambda
\]
is obtained by extension of scalars from a complex of $\QLambda$-modules
\cite[Proposition~14.7]{Schnell}; this means that all cohomology support loci of
$M$ are defined over $\QQ$, and therefore stable under the $\GalCQ$-action
on $\Char(A)$.
\end{proof}

\subsection{The Fourier-Mukai transform}

In this section, we review a few results about Fourier-Mukai transforms of holonomic
$\Dmod_A$-modules from \cite{Schnell}. The Fourier-Mukai transform, introduced by
Laumon \cite{Laumon} and Rothstein \cite{Rothstein}, is an equivalence of
categories
\[
	\FM_A \colon \Dbcoh(\Dmod_A) \to \Dbcoh(\OAsh);
\]
for a single coherent $\Dmod_A$-module $\Mmod$, it is defined by the formula
\[
	\FM_A(\Mmod) = \derR (p_2)_{\ast} \DR_{A \times \Ash/\Ash} \bigl(
		p_1^{\ast} \Mmod \tensor (\Psh, \nablash) \bigr),
\]
where $(\Psh, \nablash)$ is the universal line bundle with connection on $A \times \Ash$. 

The Fourier-Mukai transform satisfies several useful exchange formulas
\cite[Section~3.3]{Laumon}; recall that for $f \colon A \to B$ a homomorphism of abelian
varieties, 
\[
	\fp \colon \Dbcoh(\Dmod_A) \to \Dbcoh(\Dmod_B) \quad \text{and} \quad
		\fsi \colon \Dbcoh(\Dmod_B) \to \Dbcoh(\Dmod_A)
\] 
denote, respectively, the direct image and the shifted inverse image functor, while
$\DA \colon \Dbcoh(\Dmod_A) \to \Dbcoh(\Dmod_A)^{\opp}$ is the duality functor. 

\begin{theorem} \label{thm:Laumon} 
Let $\Mmod, \Mmod_1, \Mmod_2 \in \Dbcoh(\Dmod_A)$ and $\Nmod \in \Dbcoh(\Dmod_B)$.
\begin{enumerate}[(a)]
\renewcommand{\theenumi}{(\alph{enumi})}
\item For any homomorphism of abelian varieties $f \colon A \to B$, one has
\begin{align*}
	\derL (\fsh)^{\ast} \FM_A(\Mmod) &\simeq \FM_B \bigl( \fp \Mmod \bigr), \\
	\derR \fsh_{\ast} \FM_B(\Nmod) &\simeq \FM_A \bigl( \fsi \Nmod \bigr).
\end{align*}

\item One has $\FM_A \bigl( \DA \Mmod \bigr) \simeq \langle -1_{\Ash} \rangle^{\ast} \, 
		\derR \shHom \bigl( \FM_A(\Mmod), \OAsh \bigr)$.

\item Let $m \colon A \times A \to A$ be the addition morphism. Then one has
\[
	\FM_A \bigl( \mp(\Mmod_1 \boxtimes \Mmod_2) \bigr) \simeq 
		\FM_A(\Mmod_1) \Ltensor_{\OAsh} \FM_A(\Mmod_2).
\]
\end{enumerate}
\end{theorem}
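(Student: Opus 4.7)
The plan is to reduce each of the three exchange formulas to the functorial behavior of the universal object $(\Psh,\nablash)$ on $A \times \Ash$, together with base change and the projection formula for $\Dmod$-modules. The key geometric inputs are three well-known compatibilities of the Poincar\'e bundle with connection: functoriality under a homomorphism $f \colon A \to B$ and its dual $\fsh \colon \Bsh \to \Ash$, compatibility with the inversion morphism, and the theorem of the cube for the addition morphism. Each of these translates, via the definition of $\FM_A$ as a relative de Rham pushforward, into one of the claimed identities.

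For part (a), I would use the fact that on $A \times \Bsh$ there is a canonical isomorphism
\[
	(f \times \id_{\Bsh})^{\ast}(\Psh_A, \nablash_A) \simeq
		(\id_A \times \fsh)^{\ast}(\Psh_B, \nablash_B).
\]
Plugging this into the definition of $\FM_A$, applying relative base change and the projection formula for the direct image of $\Dmod$-modules along $f \times \id$, and then using the compatibility of the relative de Rham functor with proper direct image, one reads off the first formula $\derL (\fsh)^{\ast} \FM_A(\Mmod) \simeq \FM_B(\fp \Mmod)$. The second formula is obtained by the same computation, but applied to $\fsi \Nmod$, and using the base change isomorphism in the opposite direction together with the adjunction between $\fp$ and $\fsi$.

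For part (b), the input is the identity $(-1_A \times \id_{\Ash})^{\ast}(\Psh,\nablash) \simeq (\id_A \times -1_{\Ash})^{\ast}(\Psh,\nablash)^{\vee}$, together with relative Serre duality along $p_2 \colon A \times \Ash \to \Ash$; this interchanges the $\Dmod$-module duality $\DA$ with the $\shO$-module duality $\derR \shHom(-,\OAsh)$ up to the inversion automorphism of $\Ash$. For part (c), the theorem of the cube gives
\[
	(m \times \id_{\Ash})^{\ast}(\Psh,\nablash) \simeq
		p_{13}^{\ast}(\Psh,\nablash) \tensor p_{23}^{\ast}(\Psh,\nablash)
\]
on $A \times A \times \Ash$; combining this with the K\"unneth formula for the external product $\Mmod_1 \boxtimes \Mmod_2$ and with the projection formula along the addition morphism, one obtains the claim that $\FM_A$ intertwines convolution $\mp$ with derived tensor product on $\Ash$.

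The main obstacle is not conceptual but technical: one has to track the relative de Rham complex $\DR_{A \times \Ash/\Ash}$ carefully under all the base changes, keep the degree shifts straight, and verify that each base change isomorphism is compatible with the integrable connection on $\Psh$ (not just with its underlying $\shO$-module structure). This is precisely the bookkeeping carried out in Laumon's original paper, and in our setting it is legitimate to cite \cite[Section~3.3]{Laumon} for the detailed verification.
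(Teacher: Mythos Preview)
The paper does not give its own proof of this theorem: it is stated as a summary of known exchange formulas and attributed directly to \cite[Section~3.3]{Laumon}. Your proposal likewise ends by deferring to the same reference, so in that sense you and the paper agree completely on the ``proof''. The sketch you add before citing Laumon---functoriality of the Poincar\'e bundle with connection, relative Serre duality, and the theorem of the cube---is correct in spirit and is indeed the architecture of Laumon's argument; it is simply more than the paper itself provides. One small slip: in your displayed compatibility for part~(a) the subscripts on $\Psh$ are interchanged (the map $f \times \id_{\Bsh}$ lands in $B \times \Bsh$, so it should pull back $(\Psh_B,\nablash_B)$, not $(\Psh_A,\nablash_A)$), but this does not affect the substance.
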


Now let $\Dbh(\Dmod_A)$ be the full subcategory of $\Dbcoh(\Dmod_A)$ consisting of
cohomologically bounded and holonomic complexes. We already mentioned that the
cohomology support loci $S_m^k(A, \Mmod)$ of a holonomic complex are finite unions of
linear subvarieties; here is another result from \cite{Schnell} that will be used
below.

\begin{theorem} \label{thm:t-structure}
Let $\Mmod$ be a holonomic $\Dmod_A$-module. Then $\FM_A(\Mmod) \in \Dtcoh{\geq
0}(\OAsh)$, and for any $\ell \geq 0$, one has $\codim \Supp \shH^{\ell}
\FM_A(\Mmod) \geq 2 \ell$.
\end{theorem}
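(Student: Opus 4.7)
My plan is to prove the two assertions in sequence, deriving the codimension bound from the $t$-structure statement via Grothendieck duality together with the known linear structure of cohomology support loci.

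\textbf{Step 1 (non-negativity).} I would proceed by d\'evissage on the holonomic length of $\Mmod$. Since $\Dtcoh{\geq 0}(\OAsh)$ is closed under extensions, it suffices to treat a simple holonomic $\Mmod$; by standard structure theory, such an $\Mmod$ is the minimal extension of an irreducible integrable connection on a locally closed subvariety of $A$. The vanishing $\shH^k \FM_A(\Mmod) = 0$ for $k < 0$ is checked stalkwise: the derived fiber at $(L,\nabla) \in \Ash$ is, up to a shift, the hypercohomology of the perverse sheaf $\DR_A(\Mmod \tensor (L,\nabla))$. Using the pushforward exchange $\derL(\fsh)^{\ast} \FM_A(\Mmod) \simeq \FM_B(\fp \Mmod)$ from Theorem~\ref{thm:Laumon}(a), applied to a surjection $f\colon A \to B$ of abelian varieties chosen so that $f(\Supp \Mmod)$ generates $B$, the computation reduces to the analogous statement on the smaller abelian variety $B$; I would then conclude by induction on $\dim A$, with base case a rank-one integrable connection on $A$, whose Fourier--Mukai transform is a skyscraper sheaf in cohomological degree $0$.

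\textbf{Step 2 (codimension bound).} Applying the duality formula of Theorem~\ref{thm:Laumon}(b),
\[
	\derR \shHom(\FM_A(\Mmod), \OAsh) \simeq \langle -1_{\Ash} \rangle^{\ast} \FM_A(\DA \Mmod),
\]
and observing that $\DA \Mmod$ has holonomic cohomology, Step~1 together with the standard spectral sequence implies that $\derR \shHom(\FM_A(\Mmod), \OAsh)$ lies in $\Dtcoh{\geq 0}(\OAsh)$. On the smooth $2g$-dimensional variety $\Ash$, local duality turns this into the bound $\codim \Supp \shH^{\ell} \FM_A(\Mmod) \geq \ell$ for every $\ell \geq 0$. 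To sharpen to $2\ell$, I would invoke two additional inputs: by \cite[Theorem~2.2]{Schnell}, the support of $\shH^{\ell} \FM_A(\Mmod)$ is a finite union of linear subvarieties of $\Ash$; and every linear subvariety of $\Ash$, being the image of some closed immersion $\fsh\colon \Bsh \into \Ash$, has \emph{even} codimension $2(g - \dim B)$. Combining these yields $\codim \geq 2 \lceil \ell/2 \rceil$; to close the remaining gap I would iterate the argument, reducing the question for $\ell$ on $A$ to the question for $\ell - 1$ on a smaller quotient via the pushforward exchange, and then conclude by induction on $\dim A$.

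\textbf{Main obstacle.} The chief difficulty is the sharpening of the codimension bound from $\ell$ to $2\ell$: Grothendieck duality together with the parity of codimensions of linear subvarieties gives only $2 \lceil \ell/2 \rceil$, so the final factor requires a genuine induction that tracks how the cohomology sheaves $\shH^{\ell} \FM_A(\Mmod)$ decompose along the closed immersions $\fsh\colon \Bsh \into \Ash$ produced by various quotients $f\colon A \to B$. Getting this bookkeeping right, while keeping it compatible with the duality exchange used to start the induction, is where the real technical content lies.
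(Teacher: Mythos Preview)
This theorem is not proved in the paper: it is quoted from \cite{Schnell} (the sentence immediately preceding the statement reads ``here is another result from \cite{Schnell} that will be used below'') and is used only as a black box in the lemma appearing in Step~2 of the main argument. There is therefore no proof in the present paper to compare your proposal against.

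That said, your proposal has a real gap already in Step~1. You propose to verify $\FM_A(\Mmod) \in \Dtcoh{\geq 0}(\OAsh)$ stalkwise via the derived fibers, but membership in $\Dtcoh{\geq 0}$ cannot be detected this way: derived restriction $\derL i^{\ast}$ is only right $t$-exact, so a complex in $\Dtcoh{\geq 0}$ can have derived fibers with cohomology in negative degrees. Concretely, for $\Mmod = \OA$ the transform $\FM_A(\OA)$ is the skyscraper at the origin of $\Ash$, which is a sheaf sitting in degree~$0$; yet its derived fiber at the origin is the Koszul complex, with cohomology in every degree in $[-2g,0]$, matching $H^{\bullet}(A,\CC)$. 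So the hypercohomology of the perverse sheaf $\DR_A\bigl(\Mmod \tensor (L,\nabla)\bigr)$ is genuinely spread over $[-g,g]$, and the stalkwise check fails as stated. The subsequent reduction via the exchange formula inherits the same defect, since $\derL(\fsh)^{\ast}$ is again only right $t$-exact, and moreover $\fp\Mmod$ is a complex rather than a single holonomic module, so the inductive hypothesis does not apply directly.

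In Step~2 your self-diagnosis is accurate: Grothendieck duality together with the even codimension of linear subvarieties gives only $\codim \geq 2\lceil \ell/2 \rceil$, which is strictly weaker than $2\ell$ once $\ell \geq 2$, and the sentence ``reduce the question for $\ell$ on $A$ to the question for $\ell-1$ on a smaller quotient'' is a hope rather than an argument. In the source \cite{Schnell}, the $t$-structure statement and the codimension estimate are established together in a single induction, rather than one being deduced from the other after the fact.
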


The precise relationship between the support of $\FM_A(\Mmod)$ and the cohomology
support loci of $\Mmod$ is given by the base change theorem, which implies that, for
every $n \in \ZZ$, one has
\begin{equation} \label{eq:base-change}
	\bigcup_{k \geq n} \Supp \shH^k \FM_A(\Mmod) = \bigcup_{k \geq n} S_1^k(A, \Mmod)
\end{equation}
In particular, the support of the Fourier-Mukai transform $\FM_A(\Mmod)$ is equal to
the union of all the cohomology support loci of $\Mmod$.

\section{Proof of the theorem}

Consider a complex of mixed Hodge modules $M \in \Db \MHM(A)$
that admits a $\ZZ$-structure, and denote by $\rat M \in \Dbc(\QQ_A)$ the underlying
complex of constructible sheaves. To prove \theoremref{thm:main}, we have to show
that that all cohomology support loci of $M$ are complete unions of arithmetic
subvarieties of $\Char(A)$.

\subsection{Reduction steps}

Our first task is to show that every $S_m^k(A, M)$ is a finite union of arithmetic
subvarieties. The proof is by induction on the dimension of $A$; we may therefore
assume that \emph{the theorem is valid on every abelian variety of strictly smaller
dimension.} This has several useful consequences.

\begin{lemma} \label{lem:reduction}
Let $f \colon A \to B$ be a homomorphism from $A$ to a lower-dimensional abelian
variety $B$. Then every intersection 
\[
	S_m^k(A, M) \cap \im \Char(f)
\] 
is a finite union of arithmetic subvarieties.
\end{lemma}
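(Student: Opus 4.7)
The plan is to push the question from $A$ down to the lower-dimensional abelian variety $B$ and invoke the inductive hypothesis there. The key observation is that the characters lying in $\im \Char(f)$ are precisely the pullbacks $\Char(f)(\sigma)$ for $\sigma \in \Char(B)$, and the corresponding rank one local system on $A$ is $\fu \CC_{\sigma}$.

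First I would use the projection formula, together with the compatibility $\rat(\fl M) \simeq \derR \fl(\rat M)$, to obtain, for every $\sigma \in \Char(B)$,
\[
	H^k\bigl(A, \rat M \tensor_{\QQ} \CC_{\Char(f)(\sigma)}\bigr)
		\simeq H^k\bigl(B, \rat(\fl M) \tensor_{\QQ} \CC_{\sigma}\bigr).
\]
This gives the clean set-theoretic identity $\Char(f)^{-1}\bigl(S_m^k(A, M)\bigr) = S_m^k(B, \fl M)$, and hence
\[
	S_m^k(A, M) \cap \im \Char(f) = \Char(f)\bigl(S_m^k(B, \fl M)\bigr).
\]

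Next, the earlier discussion of $\ZZ$-structures shows that $\fl M \in \Db \MHM(B)$ inherits a $\ZZ$-structure from $M$. Since $\dim B < \dim A$, the inductive hypothesis applied on $B$ tells us that $S_m^k(B, \fl M)$ is a complete, and in particular finite, union of arithmetic subvarieties of $\Char(B)$.

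Finally, $\Char(f)$ is a homomorphism of algebraic groups defined over $\QQ$, and it sends an arithmetic subvariety $\tau \cdot \im\bigl(\Char(g) \colon \Char(C) \to \Char(B)\bigr)$ — with $\tau \in \Char(B)$ torsion and $g \colon B \to C$ a morphism of abelian varieties — to $\Char(f)(\tau) \cdot \im \Char(g \circ f)$, which is again an arithmetic subvariety of $\Char(A)$. This yields the conclusion. I do not anticipate any substantive obstacle: the lemma is essentially a bookkeeping exercise combining the projection formula, the stability of $\ZZ$-structures under $\fl$, and the inductive hypothesis.
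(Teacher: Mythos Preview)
Your proof is correct and follows essentially the same approach as the paper: push forward to $B$ via the projection formula to obtain $\Char(f)^{-1} S_m^k(A,M) = S_m^k(B, \fl M)$, note that $\fl M$ inherits a $\ZZ$-structure, apply the inductive hypothesis on $B$, and pull the conclusion back to $A$. You are simply more explicit than the paper about why $\Char(f)$ carries arithmetic subvarieties of $\Char(B)$ to arithmetic subvarieties of $\Char(A)$.
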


\begin{proof}
The complex $\fl M \in \Db \MHM(B)$ again admits a $\ZZ$-structure. If we now tensor
by points of $\Char(B)$ and take cohomology, we find that
\[
	\Char(f)^{-1} S_m^k(A, M) = S_m^k(B, \fl M).
\]
By induction, we know that the right-hand side is a finite union of arithmetic
subvarieties of $\Char(B)$; consequently, the same is true for the intersection
$S_m^k(A, M) \cap \im \Char(f)$.
\end{proof}

The inductive assumption lets us to show that all positive-dimensional components
of the cohomology support loci of $M$ are arithmetic.

\begin{lemma} \label{lem:positive}
Let $Z$ be an irreducible component of some $S_m^k(A, M)$. If $\dim Z \geq 1$,
then $Z$ is an arithmetic subvariety of $\Char(A)$.
\end{lemma}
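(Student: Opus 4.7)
The plan is to argue by induction on $\dim A$, exploiting that $Z$ is necessarily a linear subvariety of $\Char(A)$ by the main structure theorem for cohomology support loci. Write $Z = \rho \cdot T$, where $T = \im \Char(f)$ for some surjective homomorphism $f \colon A \to B$ with connected fibers. If $\dim B = \dim A$, then $f$ is an isogeny, $\Char(f)$ has finite kernel and surjects onto the connected variety $\Char(A)$, forcing $T = \Char(A)$, so $Z = \Char(A)$ is trivially arithmetic. Hence the essential case is $\dim B < \dim A$, where the inductive hypothesis becomes available on $B$ and on other lower-dimensional quotients of $A$.

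In this main case I would invoke Lemma~\ref{lem:reduction} through a complementary surjection. By Poincar\'e complete reducibility, pick an abelian subvariety $B' \subseteq A$ such that the restriction $f|_{B'} \colon B' \to B$ is an isogeny and $B' \cap \ker f$ is finite, and set $g \colon A \to C = A/B'$. Then $\dim C = \dim \ker f < \dim A$, and a direct dimension count shows that $\im \Char(g) \subseteq \Char(A)$ is complementary to $T$ in the sense that $T \cdot \im \Char(g) = \Char(A)$ and $T \cap \im \Char(g)$ is finite. After replacing $\rho$ by another representative of the coset $\rho T$, we may assume $\rho \in \im \Char(g)$, and then $Z \cap \im \Char(g) = \rho \cdot (T \cap \im \Char(g))$ is a nonempty finite set.

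Applying Lemma~\ref{lem:reduction} to $g$, together with the inductive hypothesis on $C$, the intersection $S_m^k(A, M) \cap \im \Char(g)$ is a finite union of arithmetic subvarieties $W_1, \dots, W_r$ of $\im \Char(g)$. The point $\rho$ lies in one such $W_i$, which is of the form $\tau \cdot S$ for a torsion character $\tau \in \Char(A)$ and a subtorus $S \subseteq \im \Char(g)$; in particular $W_i$ contains a torsion point of $\Char(A)$. Since $W_i$ is an irreducible subset of $S_m^k(A, M)$, it sits inside some irreducible component $Z'$. If $Z' = Z$, then $\tau \in W_i \subseteq Z$ already shows that $Z$ is arithmetic. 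Moreover, by Proposition~\ref{prop:Galois-Hodge} the locus $S_m^k(A, M)$ is $\GalCQ$-stable, and the subgroup $T$ is defined over $\QQ$ and hence Galois-stable as well; this means it suffices to prove arithmeticity for any single element of the (finite) Galois orbit of $Z$, because a torsion point in a Galois conjugate $h(\rho) T$ pulls back under $h^{-1} \in \GalCQ$ to a torsion point in $\rho \cdot h^{-1}(T) = \rho T = Z$.

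The main obstacle is precisely the step $Z' = Z$: a priori $W_i$ might be absorbed into a different component $Z' \neq Z$ that also meets $\im \Char(g)$ in the same finite set. I would try to overcome this by using two pieces of freedom simultaneously: the choice of the complementary subvariety $B'$ (which varies the finite set $Z \cap \im \Char(g)$ and hence the candidate points $\rho$), and the passage across the finite $\GalCQ$-orbit of $Z$ (which multiplies the number of candidate components from which a successful instance can be extracted). A careful combination of these should force at least one arithmetic component $W_i$ to land inside some Galois conjugate of $Z$, which by the argument above is enough.
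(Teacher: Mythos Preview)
Your strategy---slice by $\im\Char(g)$ for a surjection to a lower-dimensional abelian variety and apply \lemmaref{lem:reduction}---is exactly the paper's. The paper does it in three sentences: pick $f$ with $\dim B = \dim A - \tfrac{1}{2}\dim Z$ so that $Z \cap \im\Char(f)$ is a nonempty finite set, then conclude from \lemmaref{lem:reduction} that this set consists of torsion points. You have correctly spotted the subtle step the paper leaves implicit: \lemmaref{lem:reduction} controls $S_m^k(A,M) \cap \im\Char(f)$, not $Z \cap \im\Char(f)$, and a point of the latter could in principle lie on a positive-dimensional arithmetic piece $W_i$ coming from some other component $Z'$, in which case it need not itself be torsion.

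However, your proposed resolution is not a proof. The last paragraph only says that ``a careful combination \dots should force'' the conclusion, and the Galois mechanism you invoke cannot work as described: the whole configuration---$S_m^k(A,M)$, the subtori $L_j$ underlying the other components, and $\im\Char(g)$---is defined over $\QQ$ and hence $\GalCQ$-stable, so every Galois conjugate of $Z$ faces a structurally identical absorption problem and nothing is gained by passing among them. Varying $B'$ alone is also insufficient, since the union $\bigcup_{T'} (Z \cap T')$ over all complementary subtori $T'$ is only a thin (countable) subset of $Z$, not a Zariski-dense one, so no soft genericity argument is available. The direct fix is instead to choose the slice so that $Z \cap \im\Char(f)$ is itself an irreducible component of $S_m^k(A,M) \cap \im\Char(f)$: for instance, take $\dim B = g-1$, observe that $Z \cap \im\Char(f) \subset Z_j$ would force $L \cap \im\Char(f)$ into the proper subtorus $L \cap L_j$ of $L$, and choose $f$ so that $L \cap \im\Char(f)$ avoids the finitely many such subtori. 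Then $Z \cap \im\Char(f)$ is arithmetic by \lemmaref{lem:reduction} and supplies the torsion point, with no appeal to Galois conjugates.
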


\begin{proof}
Since $Z$ is a linear subvariety, it suffices to prove that $Z$ contains a torsion
point. Now $A$ is an abelian variety, and so we can find a surjective homomorphism $f
\colon A \to B$ to an abelian variety of dimension $\dim A - \dim Z/2$, such that $Z
\cap \im \Char(f)$ is a finite set of points. According to \lemmaref{lem:reduction},
the intersection is a finite union of arithmetic subvarieties, hence a finite set of
torsion points. In particular, $Z$ contains a torsion point, and is therefore an
arithmetic subvariety of $\Char(A)$.
\end{proof}

Irreducible components that are already contained in a proper arithmetic subvariety
of $\Char(A)$ can also be handled by induction.

\begin{lemma} \label{lem:contained}
Let $Z$ be an irreducible component of $S_m^k(A, M)$. If $Z$ is contained in a
proper arithmetic subvariety of $\Char(A)$, then $Z$ is itself an arithmetic subvariety.
\end{lemma}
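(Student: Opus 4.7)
The plan is to trivialize the torsion character $\rho$ by pulling back along an étale isogeny, and then appeal to \lemmaref{lem:reduction} on the cover together with the inductive hypothesis on a lower-dimensional abelian variety.

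Write $T = \rho \cdot \im \Char(g)$ with $\rho$ of finite order $n$ and $g \colon A \to B$ a surjective homomorphism with connected fibers and $\dim B < \dim A$. Let $\pi \colon A' \to A$ be the connected finite étale Galois cover of degree $n$ associated to $\ker \rho \subseteq \pi_1(A)$, so that $\pi^* \rho = 1$. The pullback $M' = \pi^+ M$ is a complex of mixed Hodge modules on $A'$, and because $\pi$ is étale, $M'$ inherits a $\ZZ$-structure from $M$. By the projection formula for the finite étale covering $\pi$,
\[
	\HH^k \bigl( A', \rat M' \tensor_{\QQ} \CC_{\pi^* \sigma} \bigr)
		\cong \bigoplus_{\chi \in \widehat{H}} \HH^k \bigl( A, \rat M \tensor_{\QQ} \CC_{\sigma \chi} \bigr),
\]
where $\widehat{H} = \langle \rho \rangle$ is the character group of the deck group $H \cong \ZZ / n\ZZ$. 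The summand with $\chi = 1$ alone has dimension $\geq m$ whenever $\sigma \in S_m^k(A, M)$, so $\Char(\pi)$ maps $S_m^k(A, M)$ into $S_m^k(A', M')$; and $\pi^* \rho = 1$ forces $\Char(\pi)(T) = \im \Char(g \circ \pi)$. Consequently $\Char(\pi)(Z) \subseteq S_m^k(A', M') \cap \im \Char(g \circ \pi)$.

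The composition $g' = g \circ \pi \colon A' \to B$ is surjective with $\dim B < \dim A'$, so by \lemmaref{lem:reduction} applied on $A'$ (valid by the inductive hypothesis on the smaller $B$) the intersection $S_m^k(A', M') \cap \im \Char(g')$ is a finite union of arithmetic subvarieties of $\Char(A')$. The homomorphism $\Char(\pi) \colon \Char(A) \to \Char(A')$ is a finite surjective map of algebraic tori whose kernel $\widehat{H}$ consists of torsion points, so preimages of subtori are subtori, preimages of torsion points are torsion, and preimages of arithmetic subvarieties are therefore finite unions of arithmetic subvarieties of $\Char(A)$. Since $Z \subseteq \Char(\pi)^{-1}\bigl( \Char(\pi)(Z) \bigr)$ and $Z$ is irreducible, $Z$ is contained in a single arithmetic subvariety $V$ of $\Char(A)$.

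The remaining and main obstacle is to promote the containment $Z \subseteq V$ to equality. When $\dim Z \geq 1$ this is automatic, since \lemmaref{lem:positive} already forces $Z$ to be arithmetic; the serious case is $\dim Z = 0$, where $Z = \{\sigma\}$ is an isolated point of $S_m^k(A, M)$. Here I would exploit the isolation of $\sigma$ together with the upper semicontinuity of $\dim \HH^k \bigl( A, \rat M \tensor_{\QQ} \CC_{\argbl} \bigr)$ to argue that the component $W$ of $S_m^k(A', M') \cap \im \Char(g')$ containing $\pi^* \sigma$ must itself be zero-dimensional; then $W$ is a single torsion point of $\Char(A')$, and every preimage in $\Char(A)$—in particular $\sigma$—is torsion, so $Z = \{\sigma\}$ is arithmetic.
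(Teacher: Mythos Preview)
Your overall strategy---pass to an \'etale isogeny that kills the torsion character $\rho$, then invoke \lemmaref{lem:reduction}---is exactly the paper's; the paper uses the multiplication-by-$n$ map $\langle n \rangle \colon A \to A$ in place of your cyclic cover $\pi \colon A' \to A$, and applies \lemmaref{lem:reduction} directly without a case split on $\dim Z$.

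There is, however, a genuine gap in your final paragraph. After two paragraphs of work you have only concluded that $Z$ sits inside some arithmetic subvariety $V$ of $\Char(A)$---but that was the \emph{hypothesis}, so nothing has been gained unless your last step goes through. It does not: the claim that the component $W$ of $S_m^k(A', M') \cap \im\Char(g')$ through $\pi^{\ast}\sigma$ must be zero-dimensional is unjustified. From the very projection formula you wrote down,
\[
	\dim \HH^k \bigl( A', \rat M' \tensor_{\QQ} \CC_{\pi^{\ast} \tau} \bigr)
		= \sum_{j=0}^{n-1} \dim \HH^k \bigl( A, \rat M \tensor_{\QQ} \CC_{\tau \rho^j} \bigr),
\]
membership of $\pi^{\ast}\tau$ in $S_m^k(A', M')$ depends on the cohomology at \emph{all} the translates $\tau, \tau\rho, \dotsc, \tau\rho^{n-1}$. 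The isolation of $\sigma$ in $S_m^k(A, M)$ controls only one summand; the remaining points $\sigma\rho^j$ may well lie on positive-dimensional components of some $S_{m'}^k(A, M)$, and their contributions can keep the sum $\geq m$ along a positive-dimensional locus through $\pi^{\ast}\sigma$. Upper semicontinuity points in the wrong direction and cannot prevent this. So the case $\dim Z = 0$, which you correctly flag as the crux, is not handled. What is needed is that the image of $Z$ remain an irreducible \emph{component} of the relevant cohomology support locus after the isogeny, so that \lemmaref{lem:reduction} identifies it (not merely something containing it) as arithmetic; your argument never establishes this.
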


\begin{proof}
It again suffices to show that $Z$ contains a torsion point. For some $n \geq 1$,
there is a torsion point of order $n$ on the arithmetic subvariety that contains $Z$.
After pushing forward by the multiplication-by-$n$ morphism $\langle n
\rangle \colon \Char(A) \to \Char(A)$, which corresponds to replacing $M$ by its
inverse image $\langle n \rangle^{\ast} M$ under $\langle n \rangle \colon A \to A$,
we can assume that $Z \subseteq \im \Char(f)$, where $f \colon A \to B$ is a morphism
to a lower-dimensional abelian variety. The assertion now follows from
\lemmaref{lem:reduction}.
\end{proof}

The following result allows us to avoid cohomology in degree $0$.

\begin{lemma} \label{lem:k}
Let $M \in \MHM(A)$, and let $Z$ be an irreducible component of some cohomology
support locus of $M$. If $Z \neq \Char(A)$, then $Z$ is contained in
$S_m^k(A, M)$ for some $k \neq 0$ and some $m \geq 1$.
\end{lemma}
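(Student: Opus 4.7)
Since the case $k \neq 0$ is trivial (just take $k' = k$ and $m' = m$), the interesting situation is when $Z$ is an irreducible component of $S_m^0(A, M)$ with $Z \neq \Char(A)$. The plan is to argue by contradiction: suppose that $Z$ is not contained in $S_{m'}^{k'}(A, M)$ for any $k' \neq 0$ and $m' \geq 1$, and derive that $Z$ must equal $\Char(A)$. Because $Z$ is irreducible, each $S_1^{k'}(A, M)$ is Zariski-closed, and only finitely many degrees $k'$ contribute (the hypercohomology of the perverse sheaf $\rat M$ is bounded), the hypothesis yields a dense Zariski-open $W \subseteq Z$ on which $h^{k'}(\rho) := \dim H^{k'}(A, \rat M \tensor_{\QQ} \CCrho)$ vanishes for every $k' \neq 0$.

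The key external input will be a generic vanishing statement on all of $\Char(A)$, namely that
\[
	B := \bigcup_{k' \neq 0} S_1^{k'}(A, M)
\]
is closed of codimension at least two. This should follow from \theoremref{thm:t-structure}: the regular holonomic $\Dmod$-module $\Mmod$ underlying $M$ satisfies $\FM_A(\Mmod) \in \Dtcoh{\geq 0}$ with $\shH^\ell$ supported in codimension $\geq 2\ell$, and combined with the base change identity \eqref{eq:base-change} this handles the positive degrees. The analogous bound for negative degrees comes from applying the same theorem to the holonomic dual $\DA \Mmod$ and then invoking the Fourier--Mukai duality formula \theoremref{thm:Laumon}(b) together with Poincar\'e--Verdier duality on $A$.

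With $B$ as above and $U = \Char(A) \setminus B$ a dense Zariski-open, I would finish by a short Euler-characteristic argument. Tensoring a bounded constructible complex by a rank-one local system does not change the Euler characteristic, so $\chi(\rho) := \sum_k (-1)^k h^k(\rho)$ is independent of $\rho$; call the common value $\chi_0$. On $U$ the higher cohomologies vanish, hence $h^0(\rho) = \chi_0$ for every $\rho \in U$. The contradiction hypothesis implies $Z \not\subseteq B$ (otherwise by irreducibility $Z \subseteq S_1^{k'}$ for some $k' \neq 0$), so $W \cap U$ is a dense open subset of $Z$; for $\rho$ in this intersection, $\rho \in Z \subseteq S_m^0$ forces $\chi_0 = h^0(\rho) \geq m \geq 1$. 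Thus $h^0(\rho) = \chi_0 \geq m$ on all of $U$, and by upper semicontinuity the closed set $\{\rho : h^0(\rho) \geq m\}$ contains the dense open $U$ and therefore equals $\Char(A)$. This means $S_m^0(A, M) = \Char(A)$, whose only irreducible component is itself, contradicting $Z \neq \Char(A)$.

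The only non-routine ingredient is the codimension-two bound on $B$; it is the incarnation of generic vanishing for holonomic $\Dmod$-modules on $A$ encoded in \theoremref{thm:t-structure} (together with duality to handle negative degrees). With that in hand, the Euler-characteristic comparison between the generic locus $U$, where $h^0 = \chi_0$, and the given component $Z$, where $h^0 \geq m$, immediately forces $\chi_0 \geq m$, and this value then propagates back to all of $\Char(A)$ to complete the contradiction.
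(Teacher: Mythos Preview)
Your argument is correct, but it brings in more than the paper needs. Both proofs rest on the constancy of the Euler characteristic $\chi(\rho) = \sum_j (-1)^j h^j(\rho)$, yet you additionally invoke \theoremref{thm:t-structure} (plus duality for negative degrees) to ensure that $B = \bigcup_{j\neq 0} S_1^j(A,M)$ is a proper closed subset; only after knowing that the generic $\rho \in \Char(A)$ has $h^j(\rho)=0$ for all $j\neq 0$ can you push the inequality $\chi_0 \geq m$ from $W$ out to all of $\Char(A)$.

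The paper bypasses this entirely. After reducing to $k=0$ as you do, compare the generic (i.e.\ minimal) values $h^j_Z$ of each $h^j$ on $Z$ with the generic values $h^j_\eta$ on the irreducible variety $\Char(A)$. Upper semicontinuity gives $h^j_Z \geq h^j_\eta$ for every $j$, while $Z \neq \Char(A)$ forces $S_m^0 \neq \Char(A)$ and hence $h^0_\eta < m \leq h^0_Z$. Since $\sum_j (-1)^j h^j_Z = \chi = \sum_j (-1)^j h^j_\eta$, the nonnegative differences $h^j_Z - h^j_\eta$ must cancel, so some odd $j'$ satisfies $h^{j'}_Z \geq h^{j'}_\eta + 1 \geq 1$, and then semicontinuity yields $Z \subseteq S_1^{j'}$. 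Thus the lemma really is immediate from Euler-characteristic constancy and semicontinuity alone; your detour through generic vanishing is valid but unnecessary, and the codimension-two bound you cite is stronger than what your own argument actually uses.
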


\begin{proof}
This follows easily from the fact that the Euler characteristic
\[
	\chi \bigl( A, \rat M \tensor_{\QQ} \CCrho \bigr)
		= \sum_{k \in \ZZ} (-1)^k \dim H^k \bigl( A, 
			\rat M \tensor_{\QQ} \CCrho \bigr)
\]
is independent of the point $\rho \in \Char(A)$.
\end{proof}

\subsection{Torsion points on components}

Let $Z$ be an irreducible component of some cohomology support locus of $M$. If $\dim
Z \geq 1$, \lemmaref{lem:positive} shows that $Z$ is an arithmetic subvariety; we may
therefore assume that $Z = \{\rho\}$ consists of a single point. We have to prove
that $\rho$ has finite order in $\Char(A)$. There are three steps.

\paragraph{Step 1}
We begin by reducing the problem to the case where $M$ is a single mixed Hodge
module. Each of the individual cohomology modules $\shH^q(M) \in \MHM(A)$ also admits
a $\ZZ$-structure (by \lemmaref{lem:shH}); we know by induction that all
positive-dimensional irreducible
components of its cohomology support loci are arithmetic subvarieties. If $\rho$ is
contained in such a component, \lemmaref{lem:contained} proves that $\rho$ is a
torsion point; we may therefore assume that whenever there is some $p \neq 0$ such
that $H^p \bigl( A, \rat \shH^q(M) \tensor_{\QQ} \CCrho \bigr)$ is nontrivial, $\rho$
is an isolated point of the corresponding cohomology support locus.
To exploit this fact, let us consider the spectral sequence
\[
	E_2^{p,q} = H^p \bigl( A, \rat \shH^q(M) \tensor_{\QQ} \CCrho \bigr)
		\Longrightarrow H^{p+q} \bigl( A, \rat M \tensor_{\QQ} \CCrho \bigr).
\]
If $E_2^{p,q} \neq 0$ for some $p \neq 0$, then $\rho$ must be an isolated point in
some cohomology support locus of $\shH^q(M)$; in that case, we can replace $M$ by the
single mixed Hodge module $\shH^q(M)$. If $E_2^{p,q} = 0$ for every $p \neq 0$, then
the spectral sequence degenerates and
\[
	H^k \bigl( A, \rat M \tensor_{\QQ} \CCrho \bigr)
	\simeq H^0 \bigl( A, \rat \shH^k(M) \tensor_{\QQ} \CCrho \bigr).
\]
But $\rho \in S_m^k(A, M)$ is an isolated point, and so by semi-continuity, it must
also be an isolated point in $S_m^0 \bigl( A, \shH^k(M) \bigr)$; again, we can
replace $M$ by the single mixed Hodge module $\shH^k(M)$.

\paragraph{Step 2}

We now construct \emph{another} mixed Hodge module with $\ZZ$-structure, such that
the union of all cohomology support loci contains $\rho$ but is not equal to
$\Char(A)$. We can then use the inductive hypothesis to reduce the problem to the case where
$\CC_{\rho^{-1}} \decal{\dim A}$ is a direct factor of $\CC \tensor_{\QQ} \rat M$;
because of \lemmaref{lem:factor}, this will be sufficient to conclude that the
character $\rho$ has finite order.

The idea for the construction comes from a recent article by Kr\"amer and Weissauer
\cite[Section~13]{KW}. Since $M \in \MHM(A)$ is a single mixed Hodge module, we can
use \lemmaref{lem:k} to arrange that $\rho \in S_m^k(A, M)$ is an isolated point for
some $k \neq 0$ and some $m \geq 1$; to simplify the argument, we shall take the
absolute value $\abs{k}$ to be as large as possible. Now let $A \times \dotsm
\times A$ denote the $d$-fold product of $A$ with itself, and let $m \colon A \times
\dotsm \times A \to A$ be the addition morphism. The $d$-fold exterior product $M
\boxtimes \dotsm \boxtimes M$ is mixed Hodge module on $A \times \dotsm \times A$,
and clearly inherits a $\ZZ$-structure from $M$. Setting
\[
	M_d = m_{\ast} \bigl( M \boxtimes \dotsm \boxtimes M \bigr) \in \Db \MHM(A),
\]
it is easy to see from our choice of $k$ that
\[
	H^{kd} \bigl( A, \rat M_d \tensor_{\QQ} \CCrho \bigr) \simeq
	H^k \bigl( A, \rat M \tensor_{\QQ} \CCrho \bigr)^{\tensor d}.
\]
The right-hand side is nonzero, and so $\rho \in S_{md}^{kd}(A, M_d)$. By a similar
spectral sequence argument as above, we must have
\[
	H^p \bigl( A, \rat \shH^q(M_d) \tensor_{\QQ} \CCrho \bigr) \neq 0
\]
for some $p,q \in \ZZ$ with $p+q = kd$ and $-g \leq p \leq g$. If we take $d > g$,
this forces $q \neq 0$. In other words, we can find $q \neq 0$ such that $\rho$ lies
in some cohomology support locus of the mixed Hodge module $\shH^q(M_d)$.

\begin{lemma}
If $q \neq 0$, all nontrivial cohomology support loci of $\shH^q(M_d)$ are
properly contained in $\Char(A)$.  
\end{lemma}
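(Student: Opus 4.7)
The plan is to combine generic vanishing for each cohomology module $\shH^{q'}(M_d)$---a direct consequence of \theoremref{thm:t-structure}---with a K\"unneth computation of $H^{\bullet}(A, \rat M_d \tensor_{\QQ} \CCrho)$, coupled through the hypercohomology spectral sequence. I first dispose of the case $k \neq 0$: for any mixed Hodge module $N$ on $A$, the Fourier-Mukai transform of its underlying regular holonomic $\Dmod_A$-module lies in $\Dtcoh{\geq 0}(\OAsh)$ and satisfies $\codim \Supp \shH^{\ell} \geq 2\ell$ by \theoremref{thm:t-structure}; combining equation \eqref{eq:base-change} at $n = 1$ with an application of \theoremref{thm:Laumon}(b) to the dual $\Dmod$-module forces $S_1^k(A, N) \subsetneq \Char(A)$ for every $k \neq 0$. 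Applied to $N = \shH^q(M_d)$, this accounts for all loci $S_m^k(A, \shH^q(M_d))$ with $k \neq 0$, irrespective of $q$.

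For the remaining case $k = 0$ and $q \neq 0$, the starting observation is that since the addition map $m \colon A \times \cdots \times A \to A$ is proper and $m^{\ast}\CCrho \simeq \CCrho \boxtimes \cdots \boxtimes \CCrho$ (the induced map on fundamental groups is addition), the projection and K\"unneth formulas yield
\[
H^{q'}\bigl(A, \rat M_d \tensor_{\QQ} \CCrho\bigr)
\simeq \bigoplus_{q_1 + \cdots + q_d = q'}
\bigotimes_{j=1}^d H^{q_j}\bigl(A, \rat M \tensor_{\QQ} \CCrho\bigr).
\]
For every $\rho$ in the complement of the proper subvariety $\bigcup_{k \neq 0} S_1^k(A, M) \subsetneq \Char(A)$, each factor with $q_j \neq 0$ vanishes, so the right-hand side is zero whenever $q' \neq 0$.

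To transfer this vanishing onto the individual cohomology modules $\shH^{q'}(M_d)$, I invoke the hypercohomology spectral sequence
\[
E_2^{p, q'} = H^p\bigl(A, \rat \shH^{q'}(M_d) \tensor_{\QQ} \CCrho\bigr)
\Longrightarrow H^{p + q'}\bigl(A, \rat M_d \tensor_{\QQ} \CCrho\bigr).
\]
Applying the first paragraph to each of the finitely many nonzero $\shH^{q'}(M_d)$, I can restrict $\rho$ to a nonempty Zariski-open subset of $\Char(A)$ on which $E_2^{p, q'} = 0$ for every $p \neq 0$; the spectral sequence then collapses at $E_2$, yielding $H^0(A, \rat \shH^{q'}(M_d) \tensor_{\QQ} \CCrho) \simeq H^{q'}(A, \rat M_d \tensor_{\QQ} \CCrho)$, which vanishes for $q' \neq 0$ by the previous paragraph. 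Hence $S_1^0(A, \shH^q(M_d)) \subsetneq \Char(A)$ for every $q \neq 0$, and \emph{a fortiori} so is each $S_m^0$. The one point requiring care---and the main potential obstacle---is the absence of circularity: the generic vanishing used to collapse the spectral sequence controls only the directions $p \neq 0$ for each $\shH^{q'}(M_d)$, which is already supplied by \theoremref{thm:t-structure}, whereas the statement being proven is the complementary direction $p = 0$ with $q \neq 0$.
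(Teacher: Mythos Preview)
Your argument is correct, but it follows a different route from the paper. The paper works entirely on the Fourier--Mukai side: using \theoremref{thm:Laumon}(c) it identifies $\FM_A\bigl(m_+(\Mmod^{\boxtimes d})\bigr)$ with $\FM_A(\Mmod)^{\Ltensor d}$, whose cohomology sheaves in nonzero degree are torsion by \theoremref{thm:t-structure}; then a single sheaf-level spectral sequence $E_2^{p,q} = \shH^p \FM_A\bigl(\shH^q m_+(\Mmod^{\boxtimes d})\bigr) \Rightarrow \shH^{p+q}\FM_A\bigl(m_+(\Mmod^{\boxtimes d})\bigr)$, combined with the vanishing $E_2^{p,q}=0$ for $p<0$ and the torsion property for $p>0$, forces $E_2^{0,q}$ to be torsion for $q\neq 0$. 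This handles all $k$ at once and yields the slightly stronger conclusion that $\shH^0\FM_A\bigl(\shH^q(M_d)\bigr)$ is a torsion sheaf.

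You instead work pointwise on $A$: generic vanishing disposes of $k\neq 0$, and for $k=0$ you combine the K\"unneth description of $H^{\bullet}(A,\rat M_d\tensor\CCrho)$ with the degeneration of the hypercohomology spectral sequence at a generic character. Your K\"unneth identity is the fibrewise shadow of the convolution formula the paper uses, and your pointwise spectral sequence is the restriction of the paper's sheafified one. The trade-off: your argument avoids invoking \theoremref{thm:Laumon}(c) directly and stays closer to ordinary cohomology, at the cost of splitting into two cases and needing generic vanishing for the auxiliary modules $\shH^{q'}(M_d)$ as a separate input. Both are clean; the paper's version is a bit more uniform.
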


\begin{proof}
It suffices to prove this for the underlying regular holonomic $\Dmod$-module
$\shH^q \mp(\Mmod \boxtimes \dotsm \boxtimes \Mmod)$. The properties of the
Fourier-Mukai transform in \theoremref{thm:Laumon} imply that
\[
	\FM_A \bigl( \mp(\Mmod \boxtimes \dotsm \boxtimes \Mmod) \bigr)
	\simeq \FM_A(\Mmod) \Ltensor_{\OAsh} \dotsm \Ltensor_{\OAsh} \FM_A(\Mmod),
\]
and all cohomology sheaves of this complex, except possibly in degree $0$, are
torsion sheaves (by \theoremref{thm:t-structure}). In the spectral sequence
\begin{align*}
	E_2^{p,q} = \shH^p \FM_A \bigl( &\shH^q 
		\mp(\Mmod \boxtimes \dotsm \boxtimes \Mmod) \bigr) \\
	&\Longrightarrow \shH^{p+q} \FM_A 
		\bigl( \mp(\Mmod \boxtimes \dotsm \boxtimes \Mmod) \bigr),
\end{align*}
the sheaf $E_2^{p,q}$ is zero when $p < 0$, and torsion when $p > 0$, for the same
reason. It follows that $E_2^{0,q}$ is also a torsion sheaf for $q \neq 0$, which
proves the assertion.
\end{proof}

\paragraph{Step 3}

Now we can easily finish the proof. The mixed Hodge module $\shH^q(M_d)$ again admits
a $\ZZ$-structure by \lemmaref{lem:shH}; by induction, all positive-dimensional
irreducible components of its cohomology support loci are proper arithmetic
subvarieties of $\Char(A)$. If $\rho$ is contained in one of them, we are done by
\lemmaref{lem:contained}. After replacing $M$ by $\shH^q(M_d)$, we can
therefore assume that, whenever $H^k \bigl( A, \rat M \tensor_{\QQ} \CCrho \bigr)$ is
nontrivial, $\rho$ is an isolated point of the corresponding cohomology support
locus. Note that we now have this for all values of $k \in \ZZ$, including $k = 0$.

Let $\Mmod$ denote the regular holonomic $\Dmod$-module underlying the mixed Hodge
module $M$.  If $(L, \nabla) \in \Ash$ is the flat line bundle corresponding to our character
$\rho$, the assumptions on $M$ guarantee that $(L, \nabla)$ is an isolated point in
the support of $\FM_A(\Mmod)$. This means that, in the derived category,
$\FM_A(\Mmod)$ has a direct factor supported on the point $(L, \nabla)$. But the
Fourier-Mukai transform is an equivalence of categories, and so $\Mmod \simeq \Mmod'
\oplus \Mmod''$, where $\Mmod'$ is a regular holonomic $\Dmod$-module whose
Fourier-Mukai transform is supported on $(L, \nabla)$. It is well-known that $\Mmod'$
is the tensor product of $(L, \nabla)^{-1}$ and a unipotent flat vector bundle; in
particular, $\Mmod$ contains a  sub-$\Dmod$-module isomorphic to $(L, \nabla)^{-1}$.
Equivalently, $\CC \tensor_{\QQ} \rat M$ has a subobject isomorphic to
$\CC_{\rho^{-1}} \decal{\dim A}$. Because the cohomology support loci of $M$ are
stable under the $\GalCQ$-action on $\Char(A)$ (by
\propositionref{prop:Galois-Hodge}), the same is true for every conjugate $g \rho$,
where $g \in \GalCQ$. We can now apply \lemmaref{lem:factor} to show that $\rho$ must
be a torsion point.  

\vspace{\baselineskip}

This concludes the proof that all cohomology support loci of $M$ are finite
unions of arithmetic subvarieties of $\Char(A)$.

\subsection{Completeness of the set of components}

We finish the proof of \theoremref{thm:main} by showing that each cohomology support
locus of $M$ is a \emph{complete} union of arithmetic subvarieties of $\Char(A)$.
The argument is based on the following simple criterion for completeness.

\begin{lemma} \label{lem:Galois-complete}
A finite union of arithmetic subvarieties of $\Char(A)$ is complete if and only if it
is stable under the action by $\GalCQ$.
\end{lemma}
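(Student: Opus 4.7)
The plan is to argue both implications directly from the definitions, after establishing two elementary facts. The first is that any subtorus of the form $T = \im \Char(f)$, where $f \colon A \to B$ is a surjective homomorphism of abelian varieties with connected fibers, is defined over $\QQ$: since $\Char(f)$ is induced by $f_\ast \colon H_1(A,\ZZ) \to H_1(B,\ZZ)$, a map of $\ZZ$-lattices, the equations cutting out $T$ have integer coefficients in the natural coordinates on $\Char(A) = \Spec \QLambda$. The second is that for any character $\rho \in \Char(A)$ of finite order $n$, the $\GalCQ$-orbit of $\rho$ is exactly $\{\rho^k : \gcd(k,n) = 1\}$; indeed, since $\rho$ has order $n$, its image in $\CC^\times$ is precisely $\mu_n$, and the restriction map $\GalCQ \to \Gal(\QQ(\mu_n)/\QQ) \simeq (\ZZ/n\ZZ)^\times$ is surjective, so each $g \in \GalCQ$ acts on $\rho$ by raising to a coprime power.

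The forward direction will then be immediate: each piece $\{\rho_i^k : \gcd(k,n_i) = 1\} \cdot T_i$ appearing in a complete decomposition of $\Sigma$ is $\GalCQ$-stable by the two facts, and hence so is $\Sigma$.

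For the converse, I would first verify that every irreducible component of any arithmetic subvariety $\rho_0 \cdot \im \Char(f_0)$ has the canonical form $\rho \cdot T$ with $\rho$ a torsion point and $T = \im \Char(f)$ for some $f \colon A \to B$ surjective with connected fibers. The identity component $T$ of $\im \Char(f_0)$ corresponds to the saturation $\bar K$ of $K = \ker (f_0)_\ast$ inside $H_1(A,\ZZ)$; since $(f_0)_\ast \tensor \RR$ agrees with the $\CC$-linear differential of $f_0$ at the origin, $K \tensor \RR = \bar K \tensor \RR$ is a complex subspace of $H_1(A,\RR)$, so its quotient by $\bar K$ is an abelian subvariety $K' \subseteq A$ and one takes $f \colon A \to A/K'$. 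The remaining components of $\rho_0 \cdot \im \Char(f_0)$ are translates of $\rho_0 \cdot T$ by the finite torsion group $\im \Char(f_0)/T \simeq \Hom(\bar K / K, \CC^\times)$. Having put every component of $\Sigma$ in this form, the $\QQ$-definability of each $T$ gives $g \cdot (\rho T) = (g \rho) \cdot T$ for $g \in \GalCQ$, so $\GalCQ$ permutes the components; grouping them into $\GalCQ$-orbits and applying the second fact, each orbit has union $\{\rho^k : \gcd(k,\mathrm{ord}(\rho))=1\} \cdot T$, which is precisely a piece in the definition of completeness. Summing over orbits, $\Sigma$ is complete. The main technical step, and the only part beyond routine bookkeeping, is the identification of irreducible components via the $\CC$-linearity of $f_\ast$ on tangent spaces, which is what lets one convert an arbitrary saturated sublattice into an abelian subvariety.
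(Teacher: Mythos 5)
Your proof is correct, and for the converse direction it takes a genuinely different route from the paper's. The paper never explicitly invokes the $\QQ$-rationality of the subtorus: for each irreducible component $\tau L$ of the $\GalCQ$-stable set $Z$ (with $\tau$ of order $n$, $L$ a subtorus), it looks at the finite sets $\Lp$ of points of prime order $p$ in $L$ with $\gcd(n,p)=1$, notes that $\ord(\tau\rho)=np$ for $\rho \in \Lp$ so that the $\GalCQ$-orbit of $\tau\Lp$ is $\menge{\tau^k}{\gcd(k,n)=1}\cdot\Lp$, and then uses density of $\bigcup_p \Lp$ in $L$ together with the fact that $Z$ is Zariski closed to conclude $\menge{\tau^k}{\gcd(k,n)=1}\cdot L \subseteq Z$. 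Your argument instead makes explicit what the density argument leaves implicit, namely that $T = \im\Char(f)$ is cut out by equations with $\ZZ$-coefficients coming from $\ker f_\ast$, hence is fixed setwise by $\GalCQ$; this gives $g(\rho T) = (g\rho)T$, so $\GalCQ$ permutes the irreducible components of $Z$ by rotating the torsion translates, and grouping into orbits immediately produces the complete decomposition. The trade-off: your route is arguably more transparent about the mechanism, but requires the normalization step of writing each irreducible component in the canonical form $\rho T$ with $f$ surjective and with connected fibers (saturating $\ker(f_0)_\ast$ and using $\CC$-linearity of the differential) -- a step the paper also needs for its statement to match \definitionref{def:complete}, but does not spell out. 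Both proofs are of comparable length, and your "Fact 2" (the computation of the $\GalCQ$-orbit of a torsion character via surjectivity of $\GalCQ \to \Gal(\QQ(\mu_n)/\QQ)$) is used in the same way in the paper for the forward direction.
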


\begin{proof}
For a point $\tau \in \Char(A)$ of order $n$, the orbit under the group $G = \GalCQ$
consists precisely of the characters $\tau^k$ with $\gcd(k,n) = 1$; consequently, a
complete collection of arithmetic subvarieties is stable under the $G$-action.
To prove the converse, let $Z$ be a finite union of arithmetic subvarieties stable
under the action by $G$. Let $\tau L$ be one of its components; here $L$ is a linear
subvariety and $\tau \in \Char(A)$ a point of order $n$, say. Let $p$ be any prime
number with $\gcd(n,p) = 1$, and denote by $\Lp$ the set of points of order $p$.
For any character $\rho \in \Lp$, we have $\ord(\tau \rho) = np$; the $G$-orbit of the
set $\tau \Lp$ is therefore equal to
\[
	(G \tau) \cdot \Lp = \menge{\tau^k}{\gcd(k,n) = 1} \cdot \Lp.
\]
Because the union of all the finite subsets $\Lp$ with $\gcd(n,p) = 1$ is dense in
the linear subvariety $L$, it follows that 
\[
	\menge{\tau^k}{\gcd(k,n) = 1} \cdot L \subseteq Z;
\]
this proves that $Z$ is complete.
\end{proof}

\begin{theorem}
Let $M \in \Db \MHM(A)$ be a complex of mixed Hodge modules that admits a
$\ZZ$-structure.  Then all cohomology support loci of $M$ are complete collections of
arithmetic subvarieties of $\Char(A)$.  
\end{theorem}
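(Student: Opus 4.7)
The plan is to assemble three ingredients that are already in place at this point of the paper. First, the previous subsection has shown that each $S_m^k(A,M)$ is a finite union of arithmetic subvarieties of $\Char(A)$; this reduces the statement to showing that the finite union is \emph{complete} in the sense of \definitionref{def:complete}. Second, \propositionref{prop:Galois-Hodge} says that the cohomology support loci of any complex of mixed Hodge modules are defined over $\QQ$ and so are stable under the natural $\GalCQ$-action on $\Char(A) = \Spec \CLambda$. Third, \lemmaref{lem:Galois-complete} characterizes completeness exactly in these Galois-theoretic terms: a finite union of arithmetic subvarieties is complete if and only if it is preserved by $\GalCQ$.

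With these three facts on the table, my proof is essentially a one-line assembly. Fix $k \in \ZZ$ and $m \geq 1$. By the previous subsection, $S_m^k(A, M)$ is a finite union of arithmetic subvarieties. By \propositionref{prop:Galois-Hodge} it is stable under the $\GalCQ$-action on $\Char(A)$. Apply \lemmaref{lem:Galois-complete} to conclude that this finite union is complete, which is precisely what needs to be shown.

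I do not foresee a genuine obstacle in this last step: all the technically demanding work — the Fourier-Mukai input of \theoremref{thm:t-structure} and \theoremref{thm:Laumon}, the spectral-sequence induction in Steps 1--3 of the previous subsection, and the Kronecker-theorem argument of \lemmaref{lem:factor} — has been done already to produce the presentation as a union of arithmetic subvarieties. The only small point to check is that the hypothesis of \propositionref{prop:Galois-Hodge} applies: that proposition is stated for an arbitrary $M \in \Db \MHM(A)$ and uses only the underlying constructible complex $\rat M$, not the $\ZZ$-structure, so it applies verbatim. Thus the completeness portion of \theoremref{thm:main} is really just the translation of Galois-stability, which is automatic on the Hodge-module side, into the geometric language of \definitionref{def:complete} via \lemmaref{lem:Galois-complete}.
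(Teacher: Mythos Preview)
Your proposal is correct and matches the paper's own proof essentially line for line: the paper also cites the result of the previous subsection, invokes \propositionref{prop:Galois-Hodge} for Galois stability, and then applies \lemmaref{lem:Galois-complete} to conclude completeness. There is nothing to add.
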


\begin{proof}
We already know that each $S_m^k(A, M)$ is a finite union of arithmetic subvarieties
of $\Char(A)$. By \propositionref{prop:Galois-Hodge}, it is stable under the
$\GalCQ$-action on $\Char(A)$; we can now apply \lemmaref{lem:Galois-complete} to
conclude that $S_m^k(A, M)$ is complete.
\end{proof}


  
  \end{document}